\tikzstyle arrowstyle=[scale=1]
\tikzstyle directed=[postaction={decorate,
decoration={markings,mark=at position .65 with {\arrow[arrowstyle]{stealth}}}}]
\newcolumntype{L}{>{$}l<{$}} 
\newcolumntype{C}{>{$}c<{$}}
\newtheorem{theorem}{Theorem}[section]
\newtheorem{lemma}[theorem]{Lemma}
\newtheorem{cor}[theorem]{Corollary}
\newtheorem{prop}[theorem]{Proposition}
\newtheorem{setup}[theorem]{Setup}
\theoremstyle{definition}
\newtheorem{definition}[theorem]{Definition}
\newtheorem{example}[theorem]{Example}
\newtheorem{obs}[theorem]{Observation}
\newtheorem{notation}[theorem]{Notation}
\newtheorem{chunk}[theorem]{}
\theoremstyle{remark}
\newtheorem{remark}[theorem]{Remark}
\newtheorem{the context}[theorem]{The Context}
\numberwithin{equation}{theorem}
\numberwithin{equation}{section}
\newcommand{\cat}[1]{\mathcal{#1}}
\newcommand{\grade}{\operatorname{grade}}
\newcommand{\tor}{\operatorname{Tor}}
\newcommand{\im}{\operatorname{Im}}
\newcommand{\Ker}{\operatorname{Ker}}
\newcommand{\ideal}[1]{\mathfrak{#1}}
\newcommand{\m}{\ideal{m}}
\newcommand{\bbz}{\mathbb{Z}}
\newcommand{\bbn}{\mathbb{N}}
\renewcommand{\geq}{\geqslant}
\renewcommand{\leq}{\leqslant}
\renewcommand{\ker}{\Ker}
\newcommand{\maps}[5]{\xymatrix{#1 \ar[r]^-{#3} & #2 \\
#4 \ar@{|->}[r] & #5 \\}}
\newcommand{\kos}{\textrm{Kos}}
\def\w{\wedge}
\def\im{\operatorname{im}}
\newcommand{\sgn}{\operatorname{sgn}}
\newcommand{\mdeg}{\operatorname{mdeg}}
\begin{document}
\title{On Restricted Powers of Complete Intersections}

\author{Keller VandeBogert }
\date{\today}

\maketitle

\begin{abstract}
    A restricted $d$th power of an ideal $I$ is obtained by restricting the exponent vectors allowed to appear on the ``natural" generating set of $I^d$, for some integer $d$. In this paper, we study homological properties of restricted powers of complete intersections. We construct an explicit minimal free resolution for any restricted power of a complete intersection which generalizes the $L$-complex construction of Buchsbaum and Eisenbud. We use this resolution to compute an explicit basis for the Koszul homology which allows us to deduce that the quotient defined by any restricted $d$th power of a complete intersection is a Golod ring for $d \geq 2$. Finally, using techniques of Miller and Rahmati, we show that the minimal free resolution of the quotient defined by any restricted power of a complete intersection admits the structure of an associative DG-algebra. 
\end{abstract}

\section{Introduction}

Let $I = (a_1 , \dots , a_n)$ be an ideal in some commutative ring $R$ and $w = (w_1 , \dots , w_n) \\ \in \bbz_{\geq 0}^n$ any vector. The $w$-restricted $d$th power of $I$ is the ideal generated by all elements of the form $a_1^{d_1} \cdots a_n^{d_n}$, where $d_i \leq w_i$ for each $1 \leq i \leq n$ and $d_1 + \cdots + d_n = d$. From a combinatorial perspective, if one imagines all exponent vectors with sum $d$ as being encoded by the dilated $(n-1)$-simplex $d \cdot \Delta^{n-1}$, then the exponent vectors for the $w$-restricted $d$th power are obtained by cutting $d \cdot \Delta^{n-1}$ by an appropriate collection of hyperplanes. This perspective is employed in, for instance, \cite{almousa2021}, where the results of Almousa, Fl\o ystad, and Lohne are extended to the case of polarizations of restricted powers of the graded maximal ideal (in a polynomial ring) with a characterization dependent on the associated $w$-restricted dilated simplex.

Gasharov, Hibi, and Peeva have also considered $w$-stable monomial ideals, where the idea is similar: simply restrict the minimal generating set of a stable monomial ideal to all monomials appearing with exponent vector bounded above by $w$. As it turns out, many of the properties possessed by stable ideals are inherited by $w$-stable ideals, such as Golodness and a linear resolution (in the equigenerated case). Moreover, since the Eliahou-Kervaire resolution is a naturally multigraded minimal free resolution, the minimal free resolution of a $w$-stable ideal is obtained by restricting to all multidegrees that are also bounded above by $w$. This recovers and generalizes the \emph{squarefree} Eliahou-Kervaire resolution, introduced in \cite{aramova1998squarefree}.

In this paper, we show that the idea of ``restricting multidegrees" can be extended to arbitrary $w$-restricted powers of complete intersections (which need not be monomial ideals). We introduce a generalization of the well-known $L$-complexes of Buchsbaum and Eisenbud (see \cite{buchsbaum1975generic}) and prove that these complexes yield a minimal free resolution of $w$-restricted powers of complete intersections. We also give an explicit description of the basis elements of the Koszul homology of any restricted power of a complete intersection by lifting basis elements of the aforementioned minimal free resolution to the Koszul homology algebra. This allows us to prove that for $d \geq 2$, the $d$th restricted power of a complete intersection always defines a Golod ring. Finally, we employ recent results of Miller and Rahmati \cite{miller2020transferring} to show that, for large enough characteristic, the minimal free resolution of the quotient defined by the restricted power of any complete intersection admits the structure of an associative DG-algebra.

\begin{figure}
    \centering
{\tiny \[\begin{tikzcd}
	&& {(2,0,0)} &&&&& {(2,0,0)} \\
	& {(1,1,0)} && {(1,0,1)} & { } & { } & {(1,1,0)} && {(1,0,1)} \\
	{(0,2,0)} && {(0,1,1)} && {(0,0,2)} &&& {(0,1,1)}
	\arrow[no head, from=1-3, to=2-2]
	\arrow[no head, from=1-3, to=2-4]
	\arrow[no head, from=2-4, to=3-5]
	\arrow[no head, from=3-3, to=3-5]
	\arrow[no head, from=2-4, to=3-3]
	\arrow[no head, from=2-4, to=2-2]
	\arrow[no head, from=2-2, to=3-1]
	\arrow[no head, from=3-1, to=3-3]
	\arrow[no head, from=2-2, to=3-3]
	\arrow[from=2-5, to=2-6]
	\arrow[no head, from=1-8, to=2-7]
	\arrow[no head, from=2-7, to=3-8]
	\arrow[no head, from=1-8, to=2-9]
	\arrow[no head, from=2-9, to=3-8]
	\arrow[no head, from=2-9, to=2-7]
\end{tikzcd}\]}
    \caption{Restriction of the dilated $1$-simplex $2 \cdot \Delta^1$ to the vector $(2,1,1)$.}
    \label{fig:my_label}
\end{figure}
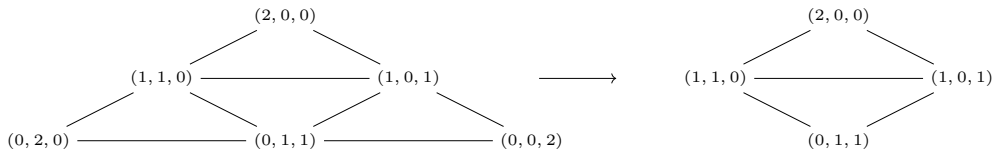

The paper is organized as follows. In Section \ref{sec:background}, we introduce notation and conventions that will be used in the rest of the paper. This includes the definition of a Golod ring in terms of trivial Massey operations and a refresher on much of the content of \cite{miller2020transferring} dealing with transferring algebra structures along special deformation retracts. In Section \ref{sec:theMFR}, we define the pieces that constitute the building blocks of the minimal free resolution of quotients defined by restricted powers of complete intersections. The construction generalizes the original construction of Buchsbaum and Eisenbud, and we give a brief and self-contained proof of acyclicity. 

In Section \ref{sec:koszulAndGolod}, we use the minimal free resolution of Section \ref{sec:theMFR} to obtain information about the Koszul homology algebra and Golodness. More precisely, we are able to find an explicit lift of the basis elements in the minimal free resolution to the Koszul homology algebra. With respect to this basis of the Koszul homology, we find that the trivial Massey operation with $\mu (h_1 , \dots , h_k) = 0$ for $k \geq 2$ is well-defined, allowing us to deduce Golodness of all $w$-restricted $d$th powers of complete intersections, for $d \geq 2$. This also gives an explicit minimal free resolution of the residue field due to a well-known construction of Golod.

In Section \ref{sec:DGA}, we prove that the minimal free resolution constructed in Section \ref{sec:theMFR} admits the structure of an associative DG-algebra. The proof of this fact relies heavily on recent techniques developed by Miller and Rahmati in \cite{miller2020transferring}. In particular, we construct an explicit algebra structure on an associated total complex obtained by restricting multidegrees, and show that the scaled de Rham map satisfies the generalized Leibniz rule with respect to this product. As a consequence, the perturbation lemma will allow us to transfer the product structure on this total complex to the generalized $L$-complexes of Section \ref{sec:theMFR}, immediately extending the application to the Buchsbaum-Eisenbud $L$-complexes given in \cite{miller2020transferring}.

\section{Restricted Powers, Golod Rings, and Transferring Algebra Structures}\label{sec:background}

The purpose of this section is to provide background and conventions for the material to be used for the rest of the paper. The main focal points of this section are the definition of Golod (see Definition \ref{def:Golod}) and the terminology introduced in the latter half, culminating in Proposition \ref{prop:perturbForDG}. For a general overview of DG-algebra techniques, including exposition on Golod rings, the reader is encouraged to consult Avramov \cite{avramov1998infinite}. The material on deformation retracts and the perturbation lemma comes from \cite{miller2020transferring}. We begin this section by defining the $w$-restricted $d$th power of a complete intersection.

\begin{definition}
Let $F$ be a free $R$-module of rank $n$ with basis $f_1 , \dots , f_n$. Let $w = (w_1 , \dots , w_n) \in \bbn^n$ be any vector and let $\psi : F \to R$ be any $R$-module homomorphism. Then the $w$-\emph{restricted $d$th power} of $\im (\psi)$, denoted $(\im \psi)^d_w$, is defined to be the ideal generated by all elements of the form
\begingroup\allowdisplaybreaks
\begin{align*}
    \psi( f_1)^{\alpha_1} \cdot \psi(f_2)^{\alpha_2} \cdots \psi (f_n)^{\alpha_n}, \quad &\textrm{where} \ \alpha_i \leq w_i \ \textrm{for all} \ 1 \leq i \leq n \\
    &\quad \textrm{and} \ \alpha_1 + \cdots + \alpha_n = d.
\end{align*}
\endgroup
\end{definition}

The idea of restricted powers has been considered before by other authors in different contexts. Indeed:
\begin{enumerate}
    \item If $w = (d,d, \dots , d)$, then $(\im \psi)_w^d = (\im \psi)^d$ is just the $d$th power of $\im \psi$.
    \item If $w = (1,1,\dots , 1)$, then $(\im \psi)_w^d$ is the \emph{squarefree} power of $\im \psi$.
    \item If $R = k[x_1 , \dots , x_n]$ and $\im \psi = (x_1 , \dots , x_n)$, then $(\im \psi)^d_w$ is an example of a $w$-stable ideal as considered by Gasharov, Hibi, and Peeva in \cite{gasharov2002resolutions}.
    \item Retaining the notation of $(3)$, let $P(d)$ denote the set of all integer partitions of $d$. Then $(\im \psi)^d_w = I_{P(d)}$, in the notation of the $S_n$-invariant ideals considered by Raicu in \cite{raicu2021regularity}.
    \item In \cite{almousa2021}, it is shown that any restricted power of the graded maximal ideal has a cellular resolution, and a complete characterization for polarizations of all such ideals is given.
\end{enumerate}

\begin{notation}
The notation $(F_\bullet , d_\bullet)$ will denote a complex $F_\bullet$ with differentials $d_\bullet$. When no confusion may occur, $F$ may be written, where the notation $d^F$ is understood to mean the differential of $F$ (in the appropriate homological degree).

Given a complex $F_\bullet$ as above, elements of $F_n$ will often be denoted $f_n$, without specifying that $f_n \in F_n$.
\end{notation}

\begin{definition}\label{def:dga}
A \emph{differential graded algebra} $(F,d)$ (DG-algebra) over a commutative Noetherian ring $R$ is a complex of finitely generated free $R$-modules with differential $d$ and with a unitary, associative multiplication $F \otimes_R F \to F$ satisfying
\begin{enumerate}[(a)]
    \item $F_i F_j \subseteq F_{i+j}$,
    \item $d_{i+j} (x_i x_j) = d_i (x_i) x_j + (-1)^i x_i d_j (x_j)$,
    \item $x_i x_j = (-1)^{ij} x_j x_i$, and
    \item $x_i^2 = 0$ if $i$ is odd,
\end{enumerate}
where $x_k \in F_k$.
\end{definition}

The next definition will be essential for defining Golod rings. If $z_\lambda$ is a cycle in some complex $A$, then the notation $[z_\lambda]$ denotes the homology class of $z_\lambda$. In the following definition, $\overline{a} := (-1)^{|a|+1} a$, where $|a|$ denotes the homological degree of $a \in A$.  

\begin{definition}
Let $A$ be a DG-algebra with $H_0 (A) \cong k$. Then $A$ admits a \emph{trivial Massey operation} if for some $k$-basis $\mathcal{B} = \{ h_\lambda \}_{\lambda \in \Lambda}$, there exists a function
$$\mu : \coprod_{i=1}^\infty \cat{B}^i \to A$$
such that
\begingroup\allowdisplaybreaks
\begin{align*}
    &\mu ( h_\lambda) = z_\lambda \quad \textrm{with} \quad [z_\lambda] = h_\lambda, \ \textrm{and} \\
    &d \mu (h_{\lambda_1} , \dots , h_{\lambda_p} ) = \sum_{j=1}^{p-1} \overline{\mu (h_{\lambda_1} , \dots , h_{\lambda_j})} \mu (h_{\lambda_{j+1}} , \dots , h_{\lambda_p}). 
\end{align*}
\endgroup
\end{definition}

Observe that taking $p=2$ in the above definition yields that $H_{\geq 1} (A)^2 = 0$, so the induced algebra structure on $H(A)$ is totally trivial for a DG-algebra admitting a trivial Massey operation.

\begin{definition}\label{def:Golod}
Let $(R,\m)$ be a local ring and let $K^R$ denote the Koszul complex on the generators of $\m$. If $K^R$ admits a trivial Massey operation $\mu$, then $R$ is called a \emph{Golod ring}. 
\end{definition}

\begin{remark}
It is worth mentioning that Definition \ref{def:Golod} is equivalent to saying that the Poincar\'e series $P^R_k (t)$ of $R$ attains a coefficient-wise inequality originally established by Serre (this is often given as the definition of Golodness, but Definition \ref{def:Golod} will be much more convenient for our purposes). As observed above, the Koszul homology algebra of any Golod ring has trivial multiplication in positive homological degrees; it is worth noting that for rings of projective dimension $\geq 4$, there exist non-Golod rings with trivial Koszul homology algebras (see work by Katth\"an \cite{katthan2017non}).
\end{remark}

Next, we recall some of the terminology and conventions established in \cite{miller2020transferring} for transferring algebra structures. The main goals for the remainder of this section are the establishment of Proposition \ref{prop:perturbForDG}, which furnishes conditions for which an algebra structure can be transferred along a perturbed deformation retract.

\begin{definition}
Let $F_\bullet$ and $G_\bullet$ be two complexes. A deformation retract is a quasi-isomorphism of complexes
\[\begin{tikzcd}
	{F_\bullet} & {G_\bullet}
	\arrow["p", shift left=1, from=1-1, to=1-2]
	\arrow["i", shift left=1, from=1-2, to=1-1]
\end{tikzcd}\]
satisfying:
\begin{enumerate}
    \item $p \circ i = 1$ and
    \item $i \circ p \simeq 1$ via some homotopy $h$ on $F_\bullet$.
\end{enumerate}
A deformation retract is \emph{special} if, furthermore, one has:
\begin{enumerate}
    \item $h\circ i = 0$,
    \item $p \circ h = 0$, and
    \item $h^2 = 0$.
\end{enumerate}
Given a deformation retract
\[\begin{tikzcd}
	{F_\bullet} & {G_\bullet,}
	\arrow["p", shift left=1, from=1-1, to=1-2]
	\arrow["i", shift left=1, from=1-2, to=1-1]
\end{tikzcd}\]
with associated homotopy $h$, a perturbation is a map $\delta$ such that $d^F + \delta$ is a differential on $F_\bullet$; that is, $(d^F + \delta)^2 = 0$. The perturbation $\delta$ is \emph{small} if $1 - \delta h$ is invertible. 
\end{definition}

Note that if $F_\bullet$ is a finite complex, then every perturbation is locally nilpotent and hence small.

\begin{setup}\label{set:perturbationSetup}
Let
\[\begin{tikzcd}
	{F_\bullet} & {G_\bullet,}
	\arrow["p", shift left=1, from=1-1, to=1-2]
	\arrow["i", shift left=1, from=1-2, to=1-1]
\end{tikzcd}\]
be a special deformation retract with associated homotopy $h$. Assume that $\delta$ is a small perturbation on $F_\bullet$ and let $A := (1 - \delta h )^{-1} \delta$. Define the following data:
\begin{enumerate}
    \item $i_\infty := i + h A i$,
    \item $p_\infty := p + p A h$,
    \item $d^F_\infty := d^F + \delta$,
    \item $d^G_\infty := d^G + p A i$, and
    \item $h_\infty := h + h A h$.
\end{enumerate}
\end{setup}

\begin{lemma}[Perturbation Lemma]
Adopt notation and hypotheses as in Setup \ref{set:perturbationSetup}. Then the data
\[\begin{tikzcd}
	{(F_\bullet , d^F_\infty)} & {(G_\bullet, d^G_\infty)}
	\arrow["p_\infty", shift left=1, from=1-1, to=1-2]
	\arrow["i_\infty", shift left=1, from=1-2, to=1-1]
\end{tikzcd}\]
is a special deformation retract with associated homotopy $h_\infty$.
\end{lemma}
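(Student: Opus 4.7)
The plan is to verify each of the five defining properties of a special deformation retract for the perturbed data by direct computation, relying on two tools: the original special retract identities $pi=1$, $ip-1 = d^F h + h d^F$, $hi=0$, $ph=0$, $h^2=0$, and the pair of recursion relations $A = \delta + \delta h A$ and $A = \delta + A h \delta$. The latter come from the defining equations $(1-\delta h)A = \delta = A(1-h\delta)$, where the second equality follows from $\delta(1-h\delta) = (1-\delta h)\delta$. These two recursions are what makes the geometric-series nature of $A$ usable in finite bookkeeping.

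First I would dispatch the easy items. The equality $(d^F_\infty)^2 = 0$ is literally the hypothesis that $\delta$ is a perturbation. The three special conditions $h_\infty i_\infty = 0$, $p_\infty h_\infty = 0$, $h_\infty^2 = 0$ reduce immediately: upon expanding each product, every resulting monomial contains a factor of $hi$, $ph$, or $h^2$, and therefore vanishes. In the same vein, $p_\infty i_\infty = 1$ collapses to $pi = 1$ after exactly these cancellations.

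The substantive computations are (i) the two chain-map conditions $d^F_\infty i_\infty = i_\infty d^G_\infty$ and $p_\infty d^F_\infty = d^G_\infty p_\infty$, and (ii) the homotopy identity $d^F_\infty h_\infty + h_\infty d^F_\infty = i_\infty p_\infty - 1$. For each, the approach is the same: expand in terms of the formulas of Setup \ref{set:perturbationSetup}; on the unperturbed fragments apply $d^F i = i d^G$, $p d^F = d^G p$, and $d^F h + h d^F = ip - 1$; then, whenever a bare $\delta$ appears adjacent to an $A$, eliminate it using the appropriate one of $A = \delta + \delta h A$ or $A = \delta + A h \delta$ (choosing the side that matches where $h$ sits in the expression). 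The terms fall into complementary pairs that telescope to the desired identity. Once the chain-map relations are in hand, $(d^G_\infty)^2 = 0$ is a one-line consequence of $p_\infty i_\infty = 1$: we get $(d^G_\infty)^2 = p_\infty (d^F_\infty)^2 i_\infty = 0$.

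The only obstacle is organizational: keeping track of which $A$-recursion to apply on which side, and sequencing the cancellations so the bookkeeping closes without leftover terms. There is no conceptual subtlety here, as this is the classical homological perturbation lemma in its strong (special-retract) form, following the pattern used in \cite{miller2020transferring}. Accordingly, the exposition should foreground the two recursion identities for $A$ and then indicate that each verification reduces to a mechanical expansion.
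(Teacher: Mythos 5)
The paper does not actually prove this lemma: it is stated as background, imported verbatim from \cite{miller2020transferring} (where it is in turn the classical basic perturbation lemma of Brown--Gugenheim), so there is no in-paper argument to compare against. Your outline is the standard direct verification and is correct in structure: the two recursions $A=\delta+\delta hA=\delta+Ah\delta$ together with the side conditions $hi=0$, $ph=0$, $h^2=0$ do make the identities $p_\infty i_\infty=1$, $h_\infty i_\infty=0$, $p_\infty h_\infty=0$, $h_\infty^2=0$ collapse exactly as you say, and $(d^G_\infty)^2=p_\infty (d^F_\infty)^2 i_\infty$ is legitimate once the chain-map identities are known. One point you should make explicit rather than leave implicit: for the substantive verifications (the chain-map conditions and $d^F_\infty h_\infty+h_\infty d^F_\infty=i_\infty p_\infty-1$) the tools you list are not quite enough. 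After substituting $d^Fh+hd^F=ip-1$ and the $A$-recursions, the leftover terms only telescope if you also feed in the \emph{expanded} form of the perturbation hypothesis, $d^F\delta+\delta d^F+\delta^2=0$, which is what lets you commute $d^F$ past $\delta$ (and hence past $A$) at the cost of quadratic terms in $\delta$; concretely one needs an identity of the shape $d^FA+Ad^F+A^2+Ad^FhA+Ahd^FA=0$. You cite $(d^F_\infty)^2=0$ only as the trivially-satisfied first axiom, so as written the telescoping would not close. With that relation added to your toolkit the argument goes through, and this is precisely the classical proof.
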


\begin{definition}
Let $F_\bullet$ be a complex of $R$-modules equipped with a product, and let $h : F_\bullet \to F_\bullet$ be any graded map. Then $h$ satisfies the \emph{generalized Leibniz rule} if for every $f$, $f' \in F_\bullet$,
$$h(f \cdot f') \subseteq h(f) X + X h (f').$$
\end{definition}

The following Proposition is a slight variant on Proposition $3.5$ of \cite{miller2020transferring}; in the original statement, it was assumed that $F_\bullet$ was a DG-algebra with the unperturbed differential. It turns out that this hypothesis is unnecessary, and that the existence of an associative product for which the associated homotopy satisfies the generalized Leibniz rule is sufficient.

\begin{prop}\label{prop:perturbForDG}
Adopt notation and hypotheses as in Setup \ref{set:perturbationSetup}, and assume furthermore that:
\begin{enumerate}
    \item $(F_\bullet , d^F)$ is an associative algebra (not necessarily satisfying the Leibniz rule),
    \item $d^F_\infty = d^F + \delta$ satisfies the Leibniz rule, and
    \item $h$ satisfies the generalized Leibniz rule.
\end{enumerate}
Then $(G_\bullet , d^G_\infty)$ is an associative DG-algebra with product:
$$g \cdot_G g' := p_\infty (i_\infty (g) \cdot i_\infty (g')).$$
Moreover, the map $i_\infty : (G_\bullet , d^G_\infty) \to (F_\bullet , d^F_\infty)$ is a morphism of DG-algebras. 
\end{prop}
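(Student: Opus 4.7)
The plan is to adapt the argument of Miller--Rahmati (Proposition 3.5 of \cite{miller2020transferring}) under the weaker hypothesis that $F_\bullet$ is only an associative algebra with respect to $d^F$. The key observation is that the Leibniz rule for $d^F$ is never used in any essential way in the original argument: only the Leibniz rule for the perturbed differential $d^F_\infty$ and the generalized Leibniz rule for $h$ enter the computation. I would proceed in three steps: (1) show that $i_\infty$ is a morphism of algebras; (2) deduce associativity of $\cdot_G$; and (3) verify the Leibniz rule for $d^G_\infty$. The graded-commutativity and squaring-to-zero axioms then pass from $F_\bullet$ to $G_\bullet$ through the product formula.

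The central computation begins from the fact that $h \circ i_\infty = 0$: since $i_\infty = i + hAi$ and the original deformation retract is special, one has $h \circ i_\infty = hi + h^2 A i = 0$. Combining this with the generalized Leibniz rule for $h$ yields
\[ h\bigl(i_\infty(g) \cdot i_\infty(g')\bigr) \subseteq h(i_\infty(g)) F_\bullet + F_\bullet\, h(i_\infty(g')) = 0, \]
and since $h_\infty = h + hAh$, the same product is also killed by $h_\infty$. Using the perturbation-lemma identity $i_\infty p_\infty = 1 - d^F_\infty h_\infty - h_\infty d^F_\infty$, the term $d^F_\infty h_\infty(i_\infty(g) \cdot i_\infty(g'))$ vanishes immediately. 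For the remaining term, apply the Leibniz rule for $d^F_\infty$ (hypothesis (2)) and the chain-map identity $d^F_\infty i_\infty = i_\infty d^G_\infty$ to express $d^F_\infty(i_\infty(g) \cdot i_\infty(g'))$ as a signed sum of two products each of the form $i_\infty(\,\cdot\,) \cdot i_\infty(\,\cdot\,)$, on which $h_\infty$ vanishes. Therefore
\[ i_\infty(g \cdot_G g') = i_\infty p_\infty\bigl(i_\infty(g) \cdot i_\infty(g')\bigr) = i_\infty(g) \cdot i_\infty(g'), \]
which is exactly the statement that $i_\infty$ preserves products; combined with $d^F_\infty i_\infty = i_\infty d^G_\infty$, this makes $i_\infty$ a morphism of DG-algebras.

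Associativity of $\cdot_G$ is then immediate: $i_\infty$ is injective (it has the retraction $p_\infty$) and preserves products, so the associative product on $F_\bullet$ transfers. For the Leibniz rule on $(G_\bullet, d^G_\infty)$, use that $p_\infty$ is a chain map from $(F_\bullet, d^F_\infty)$ to $(G_\bullet, d^G_\infty)$ to write
\[ d^G_\infty(g \cdot_G g') = p_\infty\bigl(d^F_\infty(i_\infty(g) \cdot i_\infty(g'))\bigr), \]
then expand via hypothesis (2) and the intertwining $d^F_\infty i_\infty = i_\infty d^G_\infty$ to identify the right-hand side with $d^G_\infty(g) \cdot_G g' + (-1)^{|g|} g \cdot_G d^G_\infty(g')$. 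Graded commutativity and $x^2 = 0$ for odd-degree $x$ descend from $F_\bullet$ through the formula $g \cdot_G g' = p_\infty(i_\infty(g) \cdot i_\infty(g'))$, which is sign-symmetric and vanishes on odd-degree squares whenever the $F$-level product does.

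The main point at which something could go wrong is the analysis of $h_\infty$, since a priori it involves the geometric series $(1 - \delta h)^{-1}$ and need not itself satisfy a generalized Leibniz rule. The resolution is that one never needs $h_\infty$ to behave like a derivation in general: the identity $h \circ i_\infty = 0$, combined with $h^2 = 0$, forces $h$ to annihilate every product $i_\infty(g) \cdot i_\infty(g')$ by a single application of the generalized Leibniz rule, and then the extra $hAh$ piece of $h_\infty$ is automatically applied to zero. This localized vanishing is the whole point at which hypothesis (3) enters, and it is exactly enough to push through the argument without the Leibniz rule for the original differential $d^F$.
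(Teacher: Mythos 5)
Your proposal is correct and follows exactly the route the paper intends: it reduces to Miller--Rahmati's argument and carries out the one verification the paper flags, namely that $(d^F_\infty h_\infty + h_\infty d^F_\infty)(i_\infty(g)\cdot i_\infty(g'))=0$, by observing that $h\circ i_\infty=0$ together with the generalized Leibniz rule for $h$ kills $h_\infty$ on such products, while the Leibniz rule for $d^F_\infty$ and the intertwining $d^F_\infty i_\infty = i_\infty d^G_\infty$ handle the remaining term. The paper's own proof is just a one-line pointer to this computation, so you have supplied precisely the ``straightforward computation'' it omits, with the hypotheses entering where they should.
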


\begin{proof}
The proof is essentially identical to that of \cite[Proposition 1.4]{miller2020transferring}, where one only needs to verify that $(d^F_\infty h_\infty + h_\infty d^F_\infty ) (i_\infty (f )\cdot i_\infty (f')) = 0$; this is a straightforward computation.
\end{proof}

\section{The Minimal Free Resolution}\label{sec:theMFR}

In this section, we construct an explicit minimal free resolution for the quotient defined by the $w$-restricted $d$th power of a complete intersection, for any vector $w$ and power $d$. The construction is highly reminiscent of the original construction of Buchsbaum and Eisenbud; the proof of exactness here is self contained and is distinct from the proof given in \cite{buchsbaum1975generic}. Indeed, the proof of Theorem \ref{thm:theMFR} was inspired by the proof of exactness given in \cite[Theorem 2.12]{el2014artinian}.

Let $F$ be a free $R$-module of rank $n$ with basis elements $f_1 , \dots , f_n$, with $\psi : F \to R$ any map such that $\psi(f_1) , \dots , \psi(f_n)$ forms a regular sequence. Throughout this section, we will assume either:
\begin{enumerate}
    \item $R$ is a Noetherian local ring, or
    \item $R$ is a $\bbz$-graded ring, in which case the elements $\psi( f_1) , \dots , \psi (f_n)$ are assumed to be homogeneous of positive degree.
\end{enumerate}
Both of the above conditions guarantee that any subsequence of the regular sequence $\psi (f_1) , \dots , \psi (f_n)$ remains regular. We begin by establishing the following notation, which will be employed tacitly for the remainder of the paper.

\begin{notation}\label{not:beginningNotation}
Let $F$ be a free $R$-module of rank $n$ with basis elements $f_1 , \dots , f_n$. Given integers $a$ and $b \geq 0$, the following notation will be used for conciseness:
$$\bigwedge^a := \bigwedge^a F, \qquad S_b := S_b(F),$$
denoting the exterior and symmetric algebras, respectively. Let $\sigma = (\sigma_1 < \cdots < \sigma_a)$ be an indexing set and $\alpha = (\alpha_1 , \dots , \alpha_n)$ be an exponent vector with $|\alpha| := \alpha_1 + \cdots + \alpha_n =b$. Then, the following notation will be used:
$$f_\sigma := f_{\sigma_1} \w \cdots \w f_{\sigma_a} \in \bigwedge^a, \qquad f^\alpha := f_1^{\alpha_1} \cdots f_n^{\alpha_n} \in S_b.$$
Given any integer $1 \leq i \leq n$, the notation $\epsilon_i$ will denote the vector with a $1$ in the $i$th spot and $0$'s elsewhere. Throughout the paper, for any two vectors $\alpha$, $\beta \in \bbz^n$, write
$$\alpha \leq \beta \iff \alpha_i \leq \beta_i \ \textrm{for all} \ 1 \leq i \leq n.$$
\end{notation}

\begin{definition}\label{def:mgr}
Adopt notation as in Notation \ref{not:beginningNotation}. Then the \emph{multidegree} of an element $f_\sigma \otimes f^\alpha \in \bigwedge^a \otimes S_b$, denoted $\mdeg$, is defined as
$$\mdeg (f_\sigma \otimes f^\alpha ) := (\epsilon_{\sigma_1} + \dots + \epsilon_{\sigma_a} + \alpha).$$
\end{definition}

The multigrading of Definition \ref{def:mgr} should cause no confusion, since it is induced by the natural choice of multigrading on the symmetric and exterior algebras $S_\bullet$ and $\bigwedge^\bullet$, if each $f_i$ is given multidegree $\epsilon_i$.

\begin{setup}\label{set:thewLcomps}
Adopt notation as in Notation \ref{not:beginningNotation} and let $\psi : F \to R$ be such that $\grade \im (\psi) = n$. Let $w = (w_1 , \dots , w_n)$ be any vector and set
$$(\bigwedge^a \otimes S_b )_w := \{ f_\sigma \otimes f^\alpha \mid \mdeg(f_\sigma \otimes f^\alpha ) \leq w \}.$$
Let $\kappa_{a,b}^w : (\bigwedge^a \otimes S_b )_w \to (\bigwedge^{a-1} \otimes S_{b+1} )_w$ be the map induced by the tautological Koszul differential:
\begingroup\allowdisplaybreaks
\begin{align*}
    \bigwedge^a \otimes S_b &\xrightarrow{\textrm{comult.} \otimes 1} \bigwedge^{a-1} \otimes F \otimes S_b \\
    &\xrightarrow{1 \otimes \textrm{mult.}} \bigwedge^{a-1} \otimes S_{b+1}. \\
\end{align*}
\endgroup
Observe that $\kappa_{a,b}^w$ is well defined because it preserves multidegree. With this notation, define $L^a_{b,w} (F) = L^a_{b,w} := \ker \kappa_{a,b}^w \subseteq (\bigwedge^a \otimes S_b)_w$. 
\end{setup}

As it turns out, the basis elements for these modules are actually quite comprehensible. The proof of the following proposition is a consequence of the much more general statement given in \cite[Proposition 2.1.4]{weyman2003} combined with the fact that all tableaux appearing in a given straightening relation have the same multidegree.

\begin{prop}
For every $a,b \geq 0$, the module $L^a_{b,w}$ has basis represented by the set of semistandard tableaux with multidegree bounded above by $w$.
\end{prop}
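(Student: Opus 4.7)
The strategy is to deduce the result from the classical standard basis theorem for the unrestricted kernel of the Koszul differential, using the fact that everything in sight is multigraded so the basis restricts to each multigraded component.

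First I would observe that the tensor product $\bigwedge^a \otimes S_b$ carries a natural $\bbz_{\geq 0}^n$-grading in which $f_\sigma \otimes f^\alpha$ lives in multidegree $\mdeg(f_\sigma \otimes f^\alpha)$, and the tautological Koszul differential $\bigwedge^a \otimes S_b \to \bigwedge^{a-1} \otimes S_{b+1}$ is homogeneous of multidegree $0$ (because comultiplication in the exterior algebra and multiplication in the symmetric algebra are both multigraded). Writing $L^a_b := \ker\bigl(\bigwedge^a \otimes S_b \to \bigwedge^{a-1} \otimes S_{b+1}\bigr)$ for the unrestricted kernel, it follows that $L^a_b$ inherits the multigrading and
$$L^a_{b,w} \;=\; \bigoplus_{\gamma \leq w} (L^a_b)_\gamma.$$

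Next, \cite[Proposition 2.1.4]{weyman2003} provides a basis of $L^a_b$ indexed by semistandard tableaux of the appropriate two-column shape; each such tableau corresponds to an element of $\bigwedge^a \otimes S_b$ whose multidegree is precisely the exponent vector recording the multiplicities of its entries. The straightening algorithm expresses any non-standard tableau as a linear combination of semistandard ones, and because the underlying (anti)symmetrization operators preserve the $\bbz_{\geq 0}^n$-grading, every tableau appearing in a single straightening relation shares a common multidegree. Consequently Weyman's standard basis is itself $\bbz_{\geq 0}^n$-homogeneous, so each piece $(L^a_b)_\gamma$ has as basis exactly the semistandard tableaux of multidegree $\gamma$.

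Summing over $\gamma \leq w$ then gives the claimed basis of $L^a_{b,w}$ by semistandard tableaux with multidegree bounded above by $w$. The only step that really requires scrutiny is the compatibility of the straightening law with the $\bbz_{\geq 0}^n$-grading; this is the content of the parenthetical in the author's remark and amounts to checking that each elementary straightening move only permutes entries without altering the multiset of indices appearing. Everything else follows formally by restricting a graded basis to a graded summand.
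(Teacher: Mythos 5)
Your argument is correct and is essentially the paper's own: the paper disposes of this proposition in one sentence, noting that it follows from \cite[Proposition 2.1.4]{weyman2003} together with the fact that all tableaux appearing in a given straightening relation share the same multidegree, which is exactly the reduction you carry out (restricting a multigraded standard basis of the unrestricted kernel $L^a_b$ to the components of multidegree $\leq w$). The only quibble is cosmetic: the indexing tableaux are of hook shape $(b,1^a)$ rather than ``two-column shape'' (compare the paper's example for $L^1_{3,w}$), but this does not affect the argument.
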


\begin{example}
Let $R = k[x_1 , \dots , x_3]$ and $F = Rf_1 \oplus Rf_2 \oplus Rf_3$ with $\psi : F \to R$ induced by sending $f_i \mapsto x_i$. Suppose that $w = (3,1,1)$; then
$$(x_1 , x_2 , x_3)^3_w = (x_1^3 , x_1^2 x_2 , x_1^2 x_3 , x_1x_2 x_3).$$
Likewise, the module $L^1_{3 , w} (F)$ has basis represented by the tableaux
$$ \ytableausetup
{boxsize=1.7em}
\begin{ytableau}
1 & 1 & 1\\
2 \\
\end{ytableau}, \quad \ytableausetup
{boxsize=1.7em}
\begin{ytableau}
1 & 1 & 1\\
3 \\
\end{ytableau}, \quad 
\ytableausetup
{boxsize=1.7em}
\begin{ytableau}
1 & 1 & 2\\
3 \\
\end{ytableau}, \quad 
\ytableausetup
{boxsize=1.7em}
\begin{ytableau}
1 & 1 & 3 \\
2 \\
\end{ytableau}.$$
\end{example}

The following observation is a straightforward verification which shows that the horizontal and vertical differentials of Figure \ref{fig:doubleComp} anticommute.

\begin{obs}
Adopt notation and hypotheses as in Setup \ref{set:thewLcomps}. Let $\kos : \bigwedge^a \to \bigwedge^{a-1}$ denote the Koszul differential induced by $\psi$, for any given $a \geq 0$. Then $\kos \otimes 1 : \bigwedge^\bullet \otimes S_\bullet \to \bigwedge^\bullet \otimes S_\bullet$ and $\kappa_{\bullet, \bullet}$ anticommute; that is,
$$(\kos \otimes 1) \circ \kappa_{a,b}^w  = - \kappa_{a-1,b}^w \circ (\kos \otimes 1).$$
\end{obs}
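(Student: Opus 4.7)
The plan is to verify the identity by a direct computation on a set of generators for $\bigwedge^a \otimes S_b$, namely the elements $f_\sigma \otimes f^\alpha$ with $\sigma = (\sigma_1 < \cdots < \sigma_a)$. Unwinding the composition defining $\kappa_{a,b}^w$ gives
$$\kappa_{a,b}^w(f_\sigma \otimes f^\alpha) = \sum_{j=1}^a (-1)^{j-1} \, f_{\sigma_1} \wedge \cdots \widehat{f_{\sigma_j}} \cdots \wedge f_{\sigma_a} \otimes f_{\sigma_j} f^\alpha,$$
while the Koszul differential induced by $\psi$ takes its usual form
$$\kos(f_\sigma) = \sum_{j=1}^a (-1)^{j-1} \psi(f_{\sigma_j}) \, f_{\sigma_1} \wedge \cdots \widehat{f_{\sigma_j}} \cdots \wedge f_{\sigma_a}.$$
Thus both operations are built from the same primitive move (``pluck $f_{\sigma_j}$ out of the wedge with the alternating Koszul sign''), the only difference being what happens to the plucked factor: it is either evaluated via $\psi$ and absorbed into the coefficient, or else pushed into the symmetric factor.

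First I would compute $(\kos \otimes 1) \circ \kappa_{a,b}^w (f_\sigma \otimes f^\alpha)$ as a double sum over ordered pairs $(j,k)$ with $k \neq j$. Plucking $f_{\sigma_j}$ first contributes sign $(-1)^{j-1}$ and sends the symmetric factor to $f_{\sigma_j} f^\alpha$; then plucking $f_{\sigma_k}$ from the shortened wedge contributes $(-1)^{k-1}$ when $k<j$ and $(-1)^{k-2}$ when $k>j$, and multiplies by $\psi(f_{\sigma_k})$. Next I would compute $\kappa_{a-1,b}^w \circ (\kos \otimes 1)(f_\sigma \otimes f^\alpha)$ and obtain the analogous double sum with the roles of $j$ and $k$ swapped: first plucking $f_{\sigma_k}$ (with $\psi$) and then $f_{\sigma_j}$ (pushed into the symmetric factor).

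The final step is to pair terms in the two sums according to the unordered pair $\{j,k\}$. In each matched pair, the surviving wedge factor is $f_{\sigma_1}\wedge\cdots \widehat{f_{\sigma_j}} \cdots \widehat{f_{\sigma_k}}\cdots \wedge f_{\sigma_a}$, the symmetric factor is $f_{\sigma_j} f^\alpha$, and the coefficient is $\psi(f_{\sigma_k})$ (or vice-versa, $f_{\sigma_k} f^\alpha$ paired with $\psi(f_{\sigma_j})$). A short sign check, handling the cases $k<j$ and $k>j$ separately, shows that the signs attached to matching terms in the two compositions differ by exactly $-1$, yielding $(\kos \otimes 1)\circ \kappa_{a,b}^w = -\kappa_{a-1,b}^w\circ (\kos \otimes 1)$.

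The only delicate point is the sign shift that arises from whether $k<j$ or $k>j$, since in one order the index $k$ occupies its original slot after the removal of $\sigma_j$, while in the other it is shifted down by one; but this asymmetry is exactly what produces the overall minus sign, so no obstacle arises. Since the identity is $R$-linear and it holds on the spanning elements $f_\sigma \otimes f^\alpha$, it holds everywhere, and restricting to multidegrees $\leq w$ makes no difference as both sides visibly preserve multidegree.
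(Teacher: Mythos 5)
Your computation is correct: the sign bookkeeping for the two orders of ``plucking'' indices, split into the cases $k<j$ and $k>j$, is exactly right and yields the overall minus sign. The paper omits the proof entirely (calling it a straightforward verification), and your direct check on the generators $f_\sigma\otimes f^\alpha$ is precisely the verification intended, so there is nothing to compare beyond noting that your argument fills in the omitted details correctly.
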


\begin{notation}\label{not:convenienceNotation}
For conciseness and ease of notation, subscripts indicating homological degrees will often be omitted. Moreover, the vector $w$ as in Setup \ref{set:thewLcomps} will often be omitted in the notation $\kappa_{a,b}^w$, and the much simpler notation $\kappa$ may be used.

Likewise, for any map $\psi : F \to R$, there is a naturally induced map $S_d (\psi ) : S_d \to R$ defined by sending $f^\alpha \mapsto \psi(f)^{\alpha_1} \cdots \psi(f_n)^{\alpha_n}$. This map will also be denoted simply by $\psi$.

Given a double complex $E_{\bullet , \bullet}$ with anticommuting vertical and horizontal differentials $d^{E,v}$ and $d^{E,h}$, respectively, the total complex $T(E_{\bullet , \bullet})$ will have differential $d^{E,h} - d^{E,v}$.  
\end{notation}

The following proposition will be helpful in the proof of Theorem \ref{thm:theMFR}. Recall that the assumptions put forth at the beginning of this section imply that any subsequence of our regular sequence is again a regular sequence.

\begin{prop}\label{prop:exactnessOfE}
Adopt notation and hypotheses as in Setup \ref{set:thewLcomps}. Let $E$ denote the double complex of Figure \ref{fig:doubleComp}. Then,
\begin{enumerate}
    \item all rows of $E$ except for the bottom-most row are exact, and
    \item the complexes $(\bigwedge^\bullet \otimes S_b)_w$ for $0 \leq b \leq d-1$ are exact in homological degrees $\geq 1$.
\end{enumerate}
\end{prop}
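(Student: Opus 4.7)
The idea is to prove both parts by decomposing each complex into a direct sum of multigraded subcomplexes and recognizing each summand as a classical acyclic complex. Two complementary decompositions will be used, matched to the two differentials: $\kos \otimes 1$ leaves the symmetric factor alone, so complexes built from it split by the exponent vector $\alpha$ of the $S_\bullet$-component; while $\kappa$ preserves the full multidegree $\mdeg$ by construction, so complexes built from $\kappa$ split by the multidegree $\gamma := \mdeg(f_\sigma \otimes f^\alpha)$.

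For part (2), I would fix $b$ with $0 \le b \le d-1$. The complex $(\bigwedge^\bullet \otimes S_b)_w$ with differential $\kos \otimes 1$ decomposes as a direct sum over exponent vectors $\alpha$ satisfying $|\alpha| = b$ and $\alpha \le w$. Setting $I_\alpha := \{i : \alpha_i < w_i\}$, the restriction $\mdeg(f_\sigma \otimes f^\alpha) \le w$ forces $\sigma \subseteq I_\alpha$, so the $\alpha$-summand is isomorphic, as a complex, to $\bigwedge^\bullet F_\alpha$ with the Koszul differential for the subsequence $(\psi(f_i))_{i \in I_\alpha}$, where $F_\alpha := \bigoplus_{i \in I_\alpha} R f_i$. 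Under the standing local or positively graded hypotheses adopted at the start of the section, any subsequence of the regular sequence $\psi(f_1), \dots, \psi(f_n)$ is again regular, so each such Koszul complex is acyclic in positive homological degrees. Summing over $\alpha$ yields acyclicity of the whole complex in positive degrees.

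For part (1), I interpret the rows of $E$ as $\kappa$-strands of the form
\[
(\bigwedge^b \otimes S_0)_w \xrightarrow{\kappa} (\bigwedge^{b-1} \otimes S_1)_w \xrightarrow{\kappa} \cdots \xrightarrow{\kappa} (\bigwedge^0 \otimes S_b)_w,
\]
indexed by the total weight $b$. Since $\kappa$ preserves $\mdeg$, this strand splits as a direct sum over multidegrees $\gamma \le w$ with $|\gamma| = b$. The $\gamma$-piece has basis $\{f_\sigma \otimes f^{\gamma - \epsilon_\sigma} : \sigma \subseteq \supp(\gamma)\}$, and a direct computation shows that $\kappa$ on this summand coincides with the Koszul complex of the map $F_\gamma \to R$ sending $f_i \mapsto 1$ for each $i \in \supp(\gamma)$, where $F_\gamma := \bigoplus_{i \in \supp(\gamma)} R f_i$. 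Because $1$ is a unit, this Koszul complex is contractible whenever $\supp(\gamma) \ne \emptyset$, i.e., whenever $\gamma \ne 0$, which corresponds precisely to $b \ge 1$. Thus every row with $b \ge 1$ is exact, while the bottom row ($b = 0$) admits only the multidegree $\gamma = 0$ and consists of a single copy of $R$ concentrated in homological degree zero (hence is not exact).

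The anticipated main obstacle is purely organizational: aligning the two decompositions with the indexing used in the figure and handling the Koszul sign conventions so that each multigraded summand is cleanly identified with its classical model. Once these identifications are in place, both parts reduce to two standard acyclicity results, namely that Koszul complexes on regular sequences are acyclic in positive degrees and that Koszul complexes containing a unit in their defining sequence are contractible.
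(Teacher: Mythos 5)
Your argument is correct and matches the paper's proof in essentials: both parts rest on the multigraded decomposition of the restricted strands, with part (2) splitting by the exponent $\alpha$ into Koszul complexes on subsequences of the regular sequence and part (1) splitting by $\mdeg$ into contractible Koszul complexes with unit coefficients. The only point to tidy is that the rows of $E$ of total weight $b \geq d$ are not full strands but truncations ending in $L^{b-d}_{d,w} = \ker \kappa$; their exactness (including surjectivity onto that kernel) follows immediately from the contractibility of the corresponding full strands that you establish, so this is a matter of wording rather than substance.
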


\begin{proof}
Since the rows of $E$ are obtained from restricting the multidegrees appearing in the truncated tautological Koszul complex, they must be exact, except for the bottom-most row. At the bottom, the homology is isomorphic to $R$.

To see that the complexes $(\bigwedge^\bullet \otimes S_b)_w$ are acyclic, observe that if $f_{s} \otimes f^\alpha \in (\bigwedge^1 \otimes S_b)_w$ for all $s \in S$, where $S$ is some subset of $[n] = \{ 1 , \dots , n \}$, then $f_\sigma \otimes f^\alpha \in (\bigwedge^{|\sigma|} \otimes S_b)_w$ for every $\sigma \subset S$. Thus, $(\bigwedge^\bullet \otimes S_b)_w$ decomposes as a direct sum of Koszul complexes on subsets of the set $\{ \psi (f_1) , \dots , \psi (f_n) \}$. 
\end{proof}

\begin{definition}\label{def:theLwComp}
Adopt notation and hypotheses as in Setup \ref{set:thewLcomps}. Then $L^w (\psi , d)$ denotes the complex
$$0 \to L_{d,w}^{n-1} (F) \xrightarrow{\kos \otimes 1} \cdots \xrightarrow{\kos \otimes 1} L_{d,w}^{1} (F) \xrightarrow{\kos \otimes 1} (S_d)_w \xrightarrow{\psi} R \to 0.$$
\end{definition}

We finally arrive at the main result of this section:

\begin{theorem}\label{thm:theMFR}
Adopt notation and hypotheses as in Setup \ref{set:thewLcomps}. The complex $L^w (\psi , d)$ of Definition \ref{def:theLwComp} is the minimal free resolution of $R / (\im \psi)^d_w$.
\end{theorem}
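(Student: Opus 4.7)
The plan is to induct on $d$, using Proposition~\ref{prop:exactnessOfE} together with a short exact sequence of chain complexes (each carrying the $\kos\otimes 1$ differential) that relates the $L$-complex for $d$ to that for $d-1$. For the base case $d=1$, the map $\kappa^w_{a+1,0}$ gives an isomorphism $L^a_{1,w}\cong(\bigwedge^{a+1}F)_w$ (it is injective on the unrestricted level and the multidegree condition is preserved), and, modulo signs coming from anticommutativity of $\kappa$ with $\kos\otimes 1$, the complex $L^w(\psi,1)$ is carried to the Koszul complex on $\psi(f_1),\dots,\psi(f_n)$ restricted to multidegrees $\le w$ and augmented by $\psi$; this resolves $R/(\im\psi)^1_w$ by exactly the decomposition argument used in the proof of Proposition~\ref{prop:exactnessOfE}(2).

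For the inductive step, I would set up the short exact sequence
\[
0 \to L^\bullet_{d-1,w} \hookrightarrow (\bigwedge^\bullet\otimes S_{d-1})_w \xrightarrow{\kappa} L^{\bullet-1}_{d,w} \to 0.
\]
Surjectivity on the right comes from exactness of the tautological Koszul strand at $(\bigwedge^{a-1}\otimes S_d)_w$, yielding $\im\kappa^w_{a,d-1}=\ker\kappa^w_{a-1,d}=L^{a-1}_{d,w}$, and anticommutativity of $\kappa$ with $\kos\otimes 1$ ensures every arrow is a chain map. Passing to the associated long exact sequence and invoking Proposition~\ref{prop:exactnessOfE}(2) to kill the middle complex in positive homological degrees then gives $H_j(L^\bullet_{d,w})\cong H_j(L^\bullet_{d-1,w})$ for $j\ge 1$, which vanish by the inductive hypothesis applied to the (unaugmented) complex for $d-1$.

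The step I expect to be the main obstacle is the tail of the long exact sequence, which must deliver the final exactness $\ker\psi=\im(\kos\otimes 1\colon L^1_{d,w}\to (S_d)_w)$ at the right end of the augmented complex. The sequence produces an injection $(S_d)_w/\im\hookrightarrow H_0(L^\bullet_{d-1,w})$, and by induction the target is identified with $(\im\psi)^{d-1}_w\subset R$ via the map induced by $\psi$; the hard part is to chase the connecting homomorphism (choose $\tilde\xi\in(\bigwedge^1\otimes S_{d-1})_w$ with $\kappa(\tilde\xi)=\xi$, then apply $\kos\otimes 1$) and verify that the resulting composite is precisely the map $(S_d)_w/\im\to R$ induced by $\psi$, whose injectivity is the desired exactness. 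Minimality of the resolution is automatic in either the local or graded setting, since every nonzero matrix entry of every differential in $L^w(\psi,d)$ is, up to sign, one of the $\psi(f_i)$, and these are non-units by hypothesis.
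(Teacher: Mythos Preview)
Your proposal is correct, and it takes a genuinely different route from the paper. The paper argues directly with the entire double complex $E$ of Figure~\ref{fig:doubleComp}: it compares two filtrations of the total complex $T(E)$ via the short exact sequences $0\to T(E')\to T(E)\to T(E/E')\to 0$ (deleting the bottom corner) and $0\to T(E'')\to T(E)\to T(E/E'')\to 0$ (peeling off the rightmost column). Proposition~\ref{prop:exactnessOfE}(1) forces $H_i(T(E))$ to be $R$ concentrated in degree $0$, and Proposition~\ref{prop:exactnessOfE}(2) forces $T(E'')$, which is the unaugmented $L^w(\psi,d)$, to inject into this at $H_0$; an explicit lift then identifies the augmentation as $\psi$. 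No induction on $d$ is used.

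Your argument instead walks up the double complex one column at a time: the short exact sequence $0\to L^\bullet_{d-1,w}\to(\bigwedge^\bullet\otimes S_{d-1})_w\to L^{\bullet-1}_{d,w}\to 0$ is exactly the $(d-1)$st column of $E$ together with the kernel and image of $\kappa$ on either side, and your induction replaces the row-exactness filtration with an explicit long exact sequence at each step. What you gain is that the argument stays at the level of long exact sequences of complexes and avoids total complexes entirely; what you pay is the need to chase the connecting homomorphism at the tail (which, as you outline, lands on $\psi(f^\alpha)$ and so recovers the augmentation). Both approaches consume the same two ingredients from Proposition~\ref{prop:exactnessOfE}. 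Your minimality remark is the same as the paper's implicit one.
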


\begin{figure}
    \centering
\[\begin{tikzcd}
	&&&& 0 \\
	&& 0 & {(\bigwedge^n \otimes S_{d-1})_w} & {L^{n-1}_{d,w}} \\
	& \iddots & \vdots & \vdots & \vdots \\
	\vdots && \iddots & {(\bigwedge^2 \otimes S_{d-1})_w} & {L_{d,w}^1} \\
	&& \iddots & {(\bigwedge^{1} \otimes S_{d-1})_w} & {L^0_{d,w} } \\
	\vdots & \vdots & \vdots & \iddots \\
	{(\bigwedge^2 \otimes S_0)_w} & {(\bigwedge^1 \otimes S_1)_w} & {(\bigwedge^0 \otimes S_2)_w} \\
	{(\bigwedge^1 \otimes S_0)_w} & {(\bigwedge^0 \otimes S_1)_w} \\
	{( \bigwedge^0 \otimes S_0 )_w}
	\arrow["\kappa"', from=4-3, to=4-4]
	\arrow["\kappa"', from=5-3, to=5-4]
	\arrow["{\kos \otimes 1}", from=3-4, to=4-4]
	\arrow["{\kos \otimes 1}", from=3-5, to=4-5]
	\arrow["{\kos \otimes 1}", from=4-5, to=5-5]
	\arrow["\kappa", from=7-2, to=7-3]
	\arrow["\kappa", from=7-1, to=7-2]
	\arrow[from=8-1, to=8-2]
	\arrow["{\kos \otimes 1}", from=7-1, to=8-1]
	\arrow["{\kos \otimes 1}", from=7-2, to=8-2]
	\arrow["{\kos \otimes 1}", from=8-1, to=9-1]
	\arrow["{\kos \otimes 1}", from=4-4, to=5-4]
	\arrow["\kappa"', from=5-4, to=5-5]
	\arrow["\kappa"', from=4-4, to=4-5]
	\arrow["\kappa"', from=2-4, to=2-5]
	\arrow[from=2-3, to=2-4]
	\arrow[from=2-3, to=3-3]
	\arrow["{\kos \otimes 1}", from=2-4, to=3-4]
	\arrow["{\kos \otimes 1}", from=2-5, to=3-5]
	\arrow["{\kos \otimes 1}", from=6-1, to=7-1]
	\arrow["{\kos \otimes 1}", from=6-2, to=7-2]
	\arrow["{\kos \otimes 1}", from=6-3, to=7-3]
	\arrow[from=1-5, to=2-5]
\end{tikzcd}\]
    \caption{The double complex used for the proof of Theorem \ref{thm:theMFR}; recall the conventions established in Notation \ref{not:convenienceNotation}.}
    \label{fig:doubleComp}
\end{figure}

\begin{proof}
Throughout the proof, let $T(-)$ denote the totalization of a double complex. Let $E$ denote the double complex of Figure \ref{fig:doubleComp}, $E'$ the double complex obtained by deleting the bottom most $\bigwedge^0 \otimes S_0$ term, and $E''$ the rightmost nontrivial column, which is $L^w (\psi, d)$ without $R$ in homological degree $0$. Index the total complex $T(E)$ such that $T(E)_0 = \Big( \bigoplus_{i=0}^{d-1} (\bigwedge^0 \otimes S_i)_w \Big) \oplus L^0_{d,w}$. Then the short exact sequence of total complexes
$$0 \to T(E') \to T(E) \to T(E/E') \to 0$$
combined with Proposition \ref{prop:exactnessOfE} (1) yields that 
$$H_i (T(E)) = H_i (T(E')) = \begin{cases} R & \textrm{if} \  i=0 \\
0 & \textrm{otherwise}.
\end{cases}$$
Likewise, the short exact sequence
$$0 \to T(E'') \to T(E) \to T(E/E'') \to 0$$
combined with Proposition \ref{prop:exactnessOfE} (2) yields that $H_i (T(E'')) = 0$ for $i \geq 1$ and there is an inclusion of homology
$$H_0 (T(E'')) \hookrightarrow H_0 (T(E)).$$
By definition, this inclusion lifts a cycle $f^\alpha = f_{i_1} \cdots f_{i_d} \in (S_d)_w = L^0_{d,w}$ to a cycle of $T(E)_0$; one such lift is given by
$$f^\alpha + \sum_{j=1}^d \psi( f_{i_1} \cdots f_{i_j}) f_{i_{j+1}} \cdots f_{i_d}.$$
Composing with the isomorphism $H_0 (T(E)) \xrightarrow{\sim} R$ induces the map $f^\alpha \mapsto \psi (f^\alpha)$, whence augmenting $E''$ by the map $(S_d)_w \xrightarrow{\psi} R$ remains acyclic. 
\end{proof}

\section{Koszul Homology and Golodness}\label{sec:koszulAndGolod}

In this section, we study the Koszul homology of restricted powers of complete intersections in regular local rings. Since the differentials in the complexes of Definition \ref{def:theLwComp} are induced by Koszul differentials, one can explicitly compute the Koszul lift to the homology algebra. The main result of this section is Corollary \ref{cor:Golodness}, but most of the work is done in the proof of Proposition \ref{prop:tensorCycles}. After computing a basis for the Koszul homology, we are able to find a straightforward trivial Massey operation allowing for an easy description of the minimal free resolution of the residue field over the quotient defined by a restricted power of a complete intersection.

Throughout this section, we will assume either:
\begin{enumerate}
    \item $R$ is a local ring, or
    \item $R$ is a standard graded polynomial ring over a field $k$, in which case $\psi (f_1 ) , \dots , \psi (f_n)$ are assumed to be homogeneous of positive degree.
\end{enumerate}

\begin{definition}
Let $M$ be a finitely generated $R$-module. The \emph{Koszul homology} of $M$, denoted $H_\bullet (M)$, is defined to be the homology of $M \otimes_R K_\bullet$; that is
$$H_\bullet (M) := H_\bullet (M \otimes_R K_\bullet),$$
where $K_\bullet$ denote the Koszul complex resolving the residue field, $k$.
\end{definition}

Let $A_\bullet$ and $B_\bullet$ be complexes with $H_0 (A) = R$ and $H_0 (B) = S$. Recall that there is a functorial isomorphism
$$H_\bullet (A \otimes S) \cong H_\bullet (R \otimes B)$$
induced by the natural projections
\[\begin{tikzcd}
	A & {A \otimes B} & B
	\arrow[from=1-2, to=1-1]
	\arrow[from=1-2, to=1-3]
\end{tikzcd}\]
The isomorphism can be explicitly described as follows:
\begin{enumerate}
    \item Choose a cycle $z_1$ in $A \otimes S$ representing a basis element of $H_\bullet (A \otimes S)$.
    \item Lift $z_1$ to a cycle $z_2$ in $A \otimes B$.
    \item Project $z_2$ onto a cycle $z_3$ in $R \otimes B$, then descend to homology.
\end{enumerate}

\begin{definition}\label{def:thePhiMaps}
Adopt notation and hypotheses as in Setup \ref{set:thewLcomps}. Let $K_\bullet$ denote the Koszul complex resolving the residue field $k$. The map $\phi^i_j : \bigwedge^i F \to K_j \otimes \bigwedge^{i-j} F$ is defined to be the composition
\begingroup\allowdisplaybreaks
\begin{align*}
    \bigwedge^i F &\xrightarrow{\textrm{comult}} \bigwedge^j F \otimes \bigwedge^{i-j} F \\
    &\xrightarrow{\textrm{lift} \otimes 1} K_j \otimes \bigwedge^{i-j} F ,\\
\end{align*}
\endgroup
where $\textrm{lift} : \bigwedge^j \to K_j$ denotes the projection onto $K_j$ of the cycle in $(\bigwedge^\bullet F \otimes K_\bullet)_j$ that represents any $f_\tau \in \bigwedge^j$.
\end{definition}

In the notation of Definition \ref{def:thePhiMaps}, let $z_\tau$ denote the lift of any $f_\tau \in \bigwedge^j$ to the Koszul algebra. Then the map $\phi_j^i$ of Definition \ref{def:thePhiMaps} can be written explicitly:
\begingroup\allowdisplaybreaks
\begin{align*}
    \phi_j^i (f_\sigma) = \sum_{\substack{\tau \subseteq \sigma \\
    |\tau| = j \\}} \sgn(\tau 
\subset \sigma) z_\tau \otimes f_{\sigma \backslash \tau}, 
\end{align*}
\endgroup
where $\sgn$ is the sign of the permutation that reorders $(\sigma \backslash \tau ) \cup \tau$ into ascending order. The next proposition tells us that the $\phi_i^j$ maps can be used to give an explicit lift of basis elements of the complex of Definition \ref{def:theLwComp} to the tensor product complex $L^w (\psi , d) \otimes K_\bullet$.

\begin{prop}\label{prop:tensorCycles}
Adopt notation and hypotheses as in Setup \ref{set:thewLcomps}. Let $\bigwedge^\bullet F$ denote the Koszul complex induced by the map $\psi : F \to R$ and let $(\bigwedge^\bullet F \otimes S_d)_w$ be the complex obtained by restricting to terms with multidegrees bounded above by $w$. Given any $f_\sigma \otimes f^\alpha \in (\bigwedge^i F \otimes S_{d-1})_w$, the element
$$f_\sigma \otimes f^\alpha + \sum_{j=1}^{i-1} \phi_j^i (f_\sigma) \otimes f^\alpha + \psi (f^\alpha) \phi_i^i (f_\sigma)$$
is a cycle in $\big( (\bigwedge^\bullet F \otimes S_{d-1})_w \otimes K_\bullet \big)_i$. 
\end{prop}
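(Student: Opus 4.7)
The plan is to verify $d_{\mathrm{tot}}(c) = 0$ by direct computation, exploiting the observation that $c$ is a mild perturbation of $\tilde z_\sigma \otimes f^\alpha$, where $\tilde z_\sigma := \sum_{j=0}^{i}\phi_j^i(f_\sigma)$ is the complete Koszul lift of $f_\sigma$ packaged by Definition~\ref{def:thePhiMaps}. Writing $c = \tilde z_\sigma \otimes f^\alpha - \phi_i^i(f_\sigma)\otimes f^\alpha + \psi(f^\alpha)\,\phi_i^i(f_\sigma)$ splits the verification into two pieces: (a) $\tilde z_\sigma \otimes f^\alpha$ is a cycle in the ambient complex, and (b) swapping the top Koszul-degree term $z_\sigma \otimes f^\alpha$ for $\psi(f^\alpha)\,z_\sigma$ still produces a cycle.

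For (a), I would observe that $\tilde z_\sigma$ is itself a cycle in the totalization of $\bigwedge^\bullet F \otimes K_\bullet$. This is built directly into the iterative construction of the lifts $z_\tau$: each stage is chosen so that the Koszul differential $d^K$ applied to $\phi_j^i(f_\sigma)$ cancels $(\kos \otimes 1)$ applied to $\phi_{j-1}^i(f_\sigma)$, up to the tensor-product sign. Summing these relations over $j$ yields $d_{\mathrm{tot}}(\tilde z_\sigma) = 0$. Tensoring with the constant element $f^\alpha$ preserves the cycle condition, and the multidegree restriction is automatic because $\epsilon_\sigma + \alpha \leq w$ forces $\epsilon_{\sigma \setminus \tau} + \alpha \leq w$ for every $\tau \subseteq \sigma$.

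For (b), the correction $\psi(f^\alpha)\,z_\sigma - z_\sigma \otimes f^\alpha$ has total differential governed entirely by $d^K(z_\sigma)$, which (modulo multidegree-vanishing terms) is $\sum_k(-1)^{k-1}\psi(f_{\sigma_k})\,z_{\sigma \setminus \sigma_k}$. The scalar identity $\psi(f^\alpha)\psi(f_{\sigma_k}) = \psi(f^\alpha f_{\sigma_k})$ then produces a term-by-term cancellation, once one identifies the $(S_{d-1})_w$-valued and $R$-valued summands through the augmentation $\psi$ that is implicit in the ambient totalization; this is precisely the device that appears when lifting $f^\alpha$ inside $T(E)_0$ in the proof of Theorem~\ref{thm:theMFR}.

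The main obstacle I anticipate is the precise sign bookkeeping in the lift identity of (a), together with pinning down the exact ambient complex, since the statement tacitly places $\psi(f^\alpha)\phi_i^i(f_\sigma) \in R \otimes K_i$ alongside terms that live in $(S_{d-1})_w \otimes K_\bullet$; the natural reading is a totalization in which $(\bigwedge^\bullet F \otimes S_{d-1})_w$ is augmented by $R$ via $\psi$, parallel to the diagonal arrangement of columns in Figure~\ref{fig:doubleComp}. Once these bookkeeping matters are settled, the cancellations follow from the same telescoping pattern that underlies the acyclicity argument for $T(E)$ in Theorem~\ref{thm:theMFR}, reducing the verification to a straightforward calculation.
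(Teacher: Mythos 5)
Your proposal is correct and is essentially the paper's own argument: both come down to the telescoping identity $(d^K\otimes 1)(\phi_j^i(f_\sigma)\otimes f^\alpha)=\pm(1\otimes(\kos\otimes 1))(\phi_{j-1}^i(f_\sigma)\otimes f^\alpha)$ together with the endpoint identity $\psi(f^\alpha)\psi(f_r)=\psi(f^\alpha f_r)$ under the augmentation, and your add-and-subtract of $z_\sigma\otimes f^\alpha$ to complete the sum to the full diagonal lift is only a cosmetic repackaging of the same cancellation. Be aware that the sign verification you defer in step (a) is not automatic from the definition of the $\phi_j^i$ but is the actual content of the paper's proof (comparing the parities of two reorderings of $(\sigma\setminus(\tau'\cup r))\cup\tau'\cup r$), so a complete write-up would still need to supply it.
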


\begin{proof}
Assume $1 \leq j \leq i-1$ (noting that $\phi_0^i$ is the identity). Then:
\begingroup\allowdisplaybreaks
\begin{align*}
    (\kos \otimes 1 )(\phi_j^i (f_\sigma) \otimes f^\alpha) &= (\kos \otimes 1) \Big( \sum_{\substack{\tau \subset \in \sigma \\
    |\tau| = j\\}} \sgn (\tau) z_{\tau} \otimes f_{\sigma \backslash \tau} \otimes f^\alpha \Big) \\
    &= \sum_{\substack{\tau \subset \sigma \\
    |\tau|=j \\}} \sum_{r \in \tau} \sgn(r \in \tau) \sgn(\tau \subset \sigma) \psi(f_r) z_{\tau \backslash r} \otimes f_{\sigma \backslash \tau} \otimes f^\alpha \\
    &= \sum_{\substack{\tau' \subset \sigma \\
    |\tau'| = j-1\\}} \sum_{r \in \sigma \backslash \tau'} \sgn (r \in \sigma \backslash \tau') \sgn (\tau' \subset \sigma) z_{\tau'} \otimes \psi(f_r) f_{(\sigma \backslash \tau') \backslash r} \\
    &= (1 \otimes \kos) (\phi_{j-1}^i (f_\sigma) \otimes f^\alpha) . \\
\end{align*}
\endgroup
In order to see that the above signs are equal, for any $r \in \sigma$ and $\tau' \in \sigma$, let $\tau$ be the set $\tau' \cup r$ ordered in ascending order. One can reorder the set $(\sigma \backslash (\tau' \cup r)) \cup \tau' \cup r$ into ascending order by either:
\begin{enumerate}
    \item First reorder $\tau' \cup r$ into ascending order, then reorder $(\sigma \backslash (\tau' \cup r)) \cup \tau$ into ascending order; this permutation has sign $\sgn ( r \in \tau) \sgn (\tau \subset \sigma)$.
    \item First reorder $(\sigma \backslash (\tau' \cup r)) \cup r$ into ascending order, then reorder $(\sigma \backslash \tau') \cup \tau'$ into ascending order; this permutation has sign $\sgn (r \in \sigma \backslash \tau') \sgn (\tau' \subset \sigma)$.
\end{enumerate}
Since both of the above cases yield a permutation of the same parity, the signs are indeed equal.

In the case $j=i$, one has:
\begingroup\allowdisplaybreaks
\begin{align*}
    (1 \otimes \psi) ( \phi_{i-1}^i (f_\sigma) \otimes f^\alpha ) &= \sum_{r \in \sigma} \sgn(r \in \sigma) \psi (f^\alpha \cdot f_r) z_{\sigma \backslash r} \\
    &=(\kos \otimes 1) (\psi(f^\alpha) z_\sigma). \\
\end{align*}
\endgroup
This completes the proof.
\end{proof}

\begin{cor}\label{cor:Golodness}
Adopt notation and hypotheses as in Setup \ref{set:thewLcomps}. The correspondence $f_\sigma \otimes f^\alpha  \mapsto \psi (f^\alpha) \phi_i^i (f_\sigma)$ induces an isomorphism of homology $L^w (\psi , d) \otimes k \to H_\bullet (R/(\im \psi)^d_w)$. Moreover, the ring $R/ (\im \psi)^d_w$ is Golod whenever $d \geq 2$. 
\end{cor}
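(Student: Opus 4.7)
The corollary has two parts: the explicit isomorphism and Golodness for $d \geq 2$. For the isomorphism, I would begin by combining Theorem~\ref{thm:theMFR}, which exhibits $L^w(\psi,d)$ as a minimal free resolution of $R/(\im\psi)^d_w$, with the standard identification $\operatorname{Tor}^R_\bullet(R/(\im\psi)^d_w,k) \cong H_\bullet(R/(\im\psi)^d_w)$ that holds because $K_\bullet$ resolves $k$. Minimality of the resolution forces all differentials in $L^w(\psi,d) \otimes k$ to vanish, so this identification is purely term-by-term. The chain-level content---namely, that the class of $f_\sigma \otimes f^\alpha$ corresponds to the Koszul cycle $\psi(f^\alpha)\phi_i^i(f_\sigma)$---is then supplied by Proposition~\ref{prop:tensorCycles}: the cycle produced there lives in the total complex of $(\bigwedge^\bullet F \otimes S_{d-1})_w \otimes K_\bullet$ and has $f_\sigma \otimes f^\alpha$ and $\psi(f^\alpha)\phi_i^i(f_\sigma)$ as its two extremal summands, so the two projections (onto the $L^w$-axis and onto the pure Koszul axis) must produce corresponding classes under the induced isomorphism on homology.

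For Golodness, the plan is to produce a trivial Massey operation $\mu$ using the explicit basis from part~(i). I would set $\mu(h_\lambda) = \psi(f^\alpha)\phi_i^i(f_\sigma)$ on each basis element and attempt the choice $\mu(h_{\lambda_1},\dots,h_{\lambda_p}) = 0$ for every $p \geq 2$. The Massey relation at $p=2$ then reduces to the vanishing of each chain-level product $\overline{\mu(h_{\lambda_1})}\mu(h_{\lambda_2})$ in the Koszul complex of $R/(\im\psi)^d_w$; after sign this product equals $\psi(f^{\alpha_1+\alpha_2})\phi_{i_1}^{i_1}(f_{\sigma_1})\phi_{i_2}^{i_2}(f_{\sigma_2})$. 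When $\sigma_1 \cap \sigma_2 \ne \emptyset$ the exterior wedge already kills the product. When $\sigma_1 \cap \sigma_2 = \emptyset$, the key step is to show that $\psi(f^{\alpha_1+\alpha_2}) \in (\im\psi)^d_w$, so that the product vanishes modulo the ideal. This is exactly where the hypothesis $d \geq 2$ enters: the combined exponent vector has total degree at least $d$, and the multidegree constraints $\epsilon_{\sigma_j} + \alpha_j \leq w$ on each basis element force the existence of a sub-exponent of $\alpha_1+\alpha_2$ of total degree exactly $d$ bounded above by $w$, realizing $\psi(f^{\alpha_1+\alpha_2})$ as an $R$-multiple of a generator of $(\im\psi)^d_w$. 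Once all length-two chain products vanish, the remaining Massey relations for $p \geq 3$ are satisfied automatically with $\mu \equiv 0$ as well, completing the trivial Massey operation.

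The main obstacle will be the combinatorial verification that $\psi(f^{\alpha_1+\alpha_2})$ always lies in $(\im\psi)^d_w$ when $d \geq 2$; this is the crux of the Golodness argument and is exactly where the $d \geq 2$ assumption is essential, consistent with the familiar fact that complete intersections of codimension at least two (the $d=1$ case) fail to be Golod.
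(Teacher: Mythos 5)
Your handling of the first assertion is fine and coincides with the paper's: Theorem \ref{thm:theMFR} plus the balancing of $\tor$, with Proposition \ref{prop:tensorCycles} supplying the explicit zig-zag whose two extremal terms identify the class of $f_\sigma \otimes f^\alpha$ with $[\psi(f^\alpha)\phi_i^i(f_\sigma)]$. Your Golodness strategy is also exactly the paper's (take $\mu \equiv 0$ in arity $\geq 2$ and check that the chain-level products of the chosen representatives vanish), and you have correctly isolated the crux: when $\sigma_1 \cap \sigma_2 = \emptyset$ one must show $\psi(f^{\alpha_1+\alpha_2}) \in (\im\psi)^d_w$.

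However, the justification you give for that step is false, so there is a genuine gap. Take $R = k[x_1,\dots,x_n]$ with $n \geq 3$, $\psi(f_i) = x_i$, $w = (1,\dots,1)$, $d = 2$, so $(\im\psi)^2_w = (x_ix_j : i<j)$. The degree-one classes of the generators $x_1x_3$ and $x_2x_3$ have representatives of the prescribed form $x_3\phi_1^1(f_1) = x_3 z_1$ and $x_3 z_2$ (multidegrees $\epsilon_1+\epsilon_3$ and $\epsilon_2+\epsilon_3$, both $\leq w$, with $\sigma_1 = \{1\}$ and $\sigma_2 = \{2\}$ disjoint); their product is $\pm x_3^2\, z_1 \w z_2$, and $x_3^2 \notin (x_ix_j : i<j)$. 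Here $\alpha_1 = \alpha_2 = \epsilon_3$ and $\min(\alpha_1+\alpha_2, w) = \epsilon_3$ has total degree $1 < d$, so no sub-exponent of total degree $d$ bounded by $w$ exists. Your combinatorial claim does hold when $\alpha_1 \neq \alpha_2$ (then $\max(\alpha_1,\alpha_2) \leq \min(\alpha_1+\alpha_2,w)$ has total degree $\geq d$), but it can fail when $\alpha_1 = \alpha_2$. Worse, for $n \geq 4$ no alternative assignment of representatives $\psi(f^\alpha)z_\sigma$ to the $\binom{n}{2}$ degree-one classes avoids all such collisions: each class $[x_ix_j]$ must choose one of its two indices as the ``coefficient index,'' two distinct classes choosing their common vertex as coefficient index yield a nonzero product, and an orientation of the complete graph with every in-degree at most one requires $\binom{n}{2} \leq n$. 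I should note that the paper's own proof simply asserts that all these products are trivial, so you have put your finger on precisely the delicate point of the argument; but as written your key step is a false statement, and a repair requires either nonzero arity-two values of $\mu$ (together with the higher Massey relations) or a different Golodness criterion, not the sub-exponent claim.
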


\begin{proof}
The first part of the statement is clear by construction of the isomorphism $\tor_\bullet^R ( - , k) \cong H_\bullet (- \otimes K_\bullet )$ combined with Proposition \ref{prop:tensorCycles}. The Golodness follows from noticing that the product of elements of the form $\psi (f^ \alpha) \phi_i^i (f_\sigma)$ in $R/ (\im \psi)^d_w \otimes K_\bullet$ are trivial. This implies that simply choosing $\mu ( h_1 , \dots , h_n) = 0$ for $i >1$ is a well defined trivial Massey operation on the Koszul homology. 
\end{proof}

For any $\ell \geq 1$, let $V_\ell$ denote the free $R$-module with formal basis elements
$$\{ v_{\sigma , \alpha} \mid \ell = |\sigma| + 1 \},$$
where we are thinking of each $v_{\sigma , \alpha}$ as being a formal stand-in for the basis element $\psi (f^\alpha) z_\sigma$ in the Koszul homology algebra. Then, combining Corollary \ref{cor:Golodness} with Golod's construction of the minimal free resolution of the residue field (see, for instance, \cite[Theorem 5.2.2]{avramov1998infinite}), we have:

\begin{cor}
Adopt notation and hypotheses as in Setup \ref{set:thewLcomps}. Let $(T_\bullet, \partial_\bullet)$ denote the complex with
\begingroup\allowdisplaybreaks
\begin{align*}
    T_n &:= \bigoplus_{p+h+i_1 + \cdots + i_p = n} K_h \otimes_R V_{i_1} \otimes_R \cdots \otimes_R V_{i_p}, \\
    \partial_n &: T_n \to T_{n-1}, \\
    \partial_n (a \otimes v_{\sigma^1 \alpha^1} \otimes \cdots \otimes v_{\sigma^p, \alpha^p} ) &= d(a) \otimes v_{\sigma^1 \alpha^1} \otimes \cdots \otimes v_{\sigma^p, \alpha^p} \\
    + (-1)^{|a|} & a \psi(f^{\alpha^1}) f_{\sigma^1} \otimes v_{\alpha^2 , \sigma^2} \otimes \cdots \otimes v_{\alpha^p , \sigma^p}. \\
\end{align*}
\endgroup
Then $T_\bullet$ is the minimal free resolution of the residue field $k$ over $R/(\im \psi)^d_w$.
\end{cor}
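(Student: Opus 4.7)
The plan is to invoke Golod's classical construction of the minimal free resolution of the residue field over a Golod ring (as cited, \cite[Theorem 5.2.2]{avramov1998infinite}) and then specialize the general formula using the explicit trivial Massey operation produced in the proof of Corollary~\ref{cor:Golodness}. Recall that Golod's theorem says: whenever the Koszul complex $K^S$ of a local ring $S$ admits a trivial Massey operation with respect to a basis $\mathcal{B} = \{h_\lambda\}$ of $H_+(K^S)$, the tensor product
\[
K^S \otimes_S T\bigl(s H_+(K^S)\bigr),
\]
where $T(-)$ denotes the tensor algebra and $s$ the shift functor, carries a differential built from $d^{K^S}$, from the chosen cycle representatives $\mu(h_\lambda)$, and from the higher Massey operations $\mu(h_{\lambda_1},\dots,h_{\lambda_p})$ for $p \geq 2$, and this differential yields the minimal free $S$-resolution of the residue field.

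First I would identify the data in our situation. By Corollary~\ref{cor:Golodness}, the ring $S := R/(\im\psi)^d_w$ is Golod, and a $k$-basis of $H_+(K^S)$ is given by the classes of the cycles $\psi(f^\alpha)\,z_\sigma$, indexed by basis elements $f_\sigma \otimes f^\alpha$ of $L^w(\psi,d)$; each such class lives in homological degree $|\sigma|+1$ in $H_+(K^S)$ after one realizes that $\psi(f^\alpha) z_\sigma$ corresponds (under the Koszul lift of Proposition~\ref{prop:tensorCycles}) to a generator of homological degree $|\sigma|$ in the Koszul complex. Hence the suspended basis $\{s\cdot [\psi(f^\alpha) z_\sigma]\}$ matches, rank for rank and in each homological degree, the generators $v_{\sigma,\alpha}$ of $V_{|\sigma|+1}$ prescribed in the statement. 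Under this identification the underlying free module of $T_n$ in the corollary coincides with the degree-$n$ piece of $K^S \otimes_S T\bigl(sH_+(K^S)\bigr)$.

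Next I would compare the differentials. The key input is the explicit trivial Massey operation constructed in Corollary~\ref{cor:Golodness}: one chose $\mu(h_{\sigma,\alpha}) = \psi(f^\alpha) z_\sigma$, and $\mu(h_{\lambda_1},\dots,h_{\lambda_p}) = 0$ for all $p \geq 2$. Substituting this into Golod's general formula kills every Massey term of length $\geq 2$, so the surviving part of the differential reduces to exactly two summands: the action of $d^{K^S}$ on the Koszul factor, and multiplication of the first tensor factor by the chosen cycle representative $\psi(f^{\alpha^1}) f_{\sigma^1}$ of the first $v$-factor, carrying the standard Koszul sign $(-1)^{|a|}$ from the convention $\overline{a} = (-1)^{|a|+1} a$. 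Tracking the signs through the suspension convention yields precisely the formula displayed for $\partial_n$ in the statement.

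The one point that needs care is the matching of signs and the assertion that the cycle representative of $h_{\sigma,\alpha}$, as implicitly used by Golod's formula, really is $\psi(f^\alpha) f_\sigma$ viewed as a $K^S$-chain rather than the more elaborate sum produced by Proposition~\ref{prop:tensorCycles}. This is where the proof plan needs its only substantive verification: the extra terms $\phi^i_j(f_\sigma)\otimes f^\alpha$ in the Proposition~\ref{prop:tensorCycles} lift live in $(L^w(\psi,d)\otimes K_\bullet)_i$ but project to zero modulo $(\im\psi)^d_w$ in every piece except $\psi(f^\alpha) z_\sigma$, so inside $K^S$ the cycle representative genuinely collapses to $\psi(f^\alpha) f_\sigma$. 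Once this is verified, the complex $(T_\bullet, \partial_\bullet)$ of the statement is literally the Golod resolution, and minimality follows because all entries of $\partial_n$ lie in the maximal ideal of $S$: the Koszul differential does so by construction, and $\psi(f^{\alpha^1}) \in (\im\psi)^d_w$ but $\psi(f^{\alpha^1}) f_{\sigma^1}$ still has Koszul-coefficient $\psi(f^{\alpha^1})$ lying in the maximal ideal of $R$ (hence of $S$), so no unit entries can appear.
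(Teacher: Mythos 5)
Your proposal follows essentially the paper's own route: the paper gives no separate argument, presenting the corollary as an immediate consequence of Corollary \ref{cor:Golodness} together with Golod's construction \cite[Theorem 5.2.2]{avramov1998infinite}, which is precisely the substitution (trivial Massey products for $p\geq 2$, cycle representatives $\psi(f^{\alpha})z_{\sigma}$) that you carry out in detail. One small slip worth correcting: $\psi(f^{\alpha^1})$ is a product of $d-1$ of the $\psi(f_i)$ and so need \emph{not} lie in $(\im\psi)^d_w$; the correct and sufficient observation for minimality is simply that it lies in $\m$ once $d\geq 2$, which is also the hypothesis under which the Golodness input from Corollary \ref{cor:Golodness} is available.
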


\begin{remark}
In a slightly different direction, it is easy to see that $w$-restricted powers of arbitrary monomial ideals are $d$-Golod (hence Golod), in the terminology of \cite{herzog2018koszul}.
\end{remark}

\section{Algebra Structure on the Minimal Free Resolution}\label{sec:DGA}

In this section, we prove that the generalized $L$-complexes of Definition \ref{def:theLwComp} admit the structure of an associative DG-algebra. The methods employed here come from techniques developed by Miller and Rahmati in \cite{miller2020transferring}. The process of constructing the algebra structure consists of a few steps. First, we construct an algebra structure on an associated total complex that surjects onto the complexes of Definition \ref{def:theLwComp}. Next, we observe that the scaled de Rham map (see Lemma \ref{lem:scaleDerham}) satisfies the generalized Leibniz rule with respect to this algebra structure, inducing a special deformation retract. Finally, the result will follow after combining the previous two sentences with Proposition \ref{prop:perturbForDG}.

Let $F$ be a free $R$-module of rank $n$ with basis elements $f_1 , \dots , f_n$, with $\psi : F \to R$ any map such that $\psi(f_1) , \dots , \psi(f_n)$ forms a regular sequence. Throughout this section, we will assume either:
\begin{enumerate}
    \item $R$ is a Noetherian local ring and $n+d$ is a unit, or
    \item $R$ is a $\bbz$-graded ring and $n+d$ is a unit, in which case the elements $\psi( f_1) , \dots , \psi (f_n)$ are assumed to be homogeneous of positive degree.
\end{enumerate}
The characteristic assumptions imposed in $(1)$ and $(2)$ are needed because of the definition of the scaled de Rham differential given in Lemma \ref{lem:scaleDerham}. In the following definition, we show how to build a well-defined algebra structure after restricting multidegrees. 

\begin{definition}\label{def:theProduct}
Let $(S_\bullet)_w$ denote the $R$-submodule of $S_\bullet$ generated by all monomials with multidegree bounded by $w$. Observe that $(S_\bullet)_w$ is not a subalgebra with respect to the ordinary multiplication on $S_\bullet$. However, $(S_\bullet)_w$ may be given an associative algebra structure as follows (notice: this is not necessarily graded):
$$f^\alpha \cdot f^\beta = \psi( f^{\alpha + \beta - \min (\alpha+ \beta , w)}) f^{\min ( \alpha + \beta , w)}.$$
Let $X_d^w$ denote the total complex of the double complex obtained by deleting the rightmost nontrivial column of the double complex in Figure \ref{fig:doubleComp}. Then $X_d^w$ may be given the structure of an associative algebra with product defined as follows:
$$(f_\sigma \otimes f^\alpha) (f_\tau \otimes f^\beta) = \begin{cases} 
0 & \textrm{if} \ \alpha_i + \beta_i \geq w_i, \ \textrm{and} \  i \in \sigma \cup \tau \ \textrm{for some} \ i, \\
f_\sigma \w f_\tau \otimes f^\alpha \cdot f^\beta & \textrm{otherwise}, \\
\end{cases}$$
where in the above, it is understood that if $f^\alpha \cdot f^\beta \in (S_{\geq d})_w$, then the product is $0$.
\end{definition}

\begin{example}
The product of Definition \ref{def:theProduct} may seem unnecessarily complicated at first sight, but it is important to notice that the ``obvious" choice may not be well-defined. For instance, let $R = k[x_1 , x_2]$ and $F = Rf_1 \oplus Rf_2$ with $\psi : F \to R$ induced by sending $f_i \mapsto x_i$. If $w = (1,1)$, then the standard product on $S_\bullet$ does not restrict to a product on $(S_\bullet)_w$ since $f_1 \cdot f_1 = f_1^2 \notin (S_\bullet)_w$. The product of Definition \ref{def:theProduct} pulls the ``overflow" out as a coefficient, so that $f_1 \cdot f_1 = x_1 f_1 \in (S_\bullet)_w$.
\end{example}

\begin{prop}\label{prop:totalCxDG}
Adopt notation and hypotheses as in Setup \ref{set:thewLcomps}. With product as in Definition \ref{def:theProduct}, the total complex $X_d^w$ is an associative DG-algebra.
\end{prop}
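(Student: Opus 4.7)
The plan is to exhibit $X_d^w$ as a quotient of a more tractable associative DG-algebra, so that all four axioms of Definition \ref{def:dga}---associativity, graded commutativity, $x^2 = 0$ for odd $|x|$, and the Leibniz rule---descend from the ambient algebra. Let
\[
\overline{S}_\bullet := S_\bullet F / \langle f_i^{w_i+1} - \psi(f_i) f_i^{w_i} : 1 \leq i \leq n \rangle.
\]
As an $R$-module, $\overline{S}_\bullet$ has basis $\{f^\alpha : \alpha \leq w\}$, and the relation $f_i^{w_i+k} \equiv \psi(f_i)^k f_i^{w_i}$ (proved by induction on $k$) shows that the induced product on this basis coincides with the capped product $\cdot_w$ of Definition \ref{def:theProduct}. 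The tensor product $\bigwedge F \otimes_R \overline{S}_\bullet$ is then an associative, graded-commutative $R$-algebra with $x^2 = 0$ for $|x| \geq 1$ odd, all inherited from the exterior factor.

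Next I would promote $\bigwedge F \otimes_R \overline{S}_\bullet$ to a DG-algebra with differential $\kappa - \kos \otimes 1$ (following the total-complex convention of Notation \ref{not:convenienceNotation}). The summand $\kos \otimes 1$ is a derivation because $\kos$ is a derivation of $\bigwedge F$ and acts trivially on $\overline{S}_\bullet$. For $\kappa$, I would use that the unrestricted tautological Koszul differential is a classical derivation on $\bigwedge F \otimes_R S_\bullet F$ and descends to the quotient: the defining elements $f_i^{w_i+1} - \psi(f_i) f_i^{w_i}$ lie in $\bigwedge^0 \otimes S_\bullet F$, hence are annihilated by $\kappa$ (which lowers exterior degree), so the derivation property $\kappa(axb) = \kappa(a)xb + (-1)^{|a|}a\kappa(x)b + (-1)^{|a|+|x|}ax\kappa(b)$ forces $\kappa$ to preserve the defining ideal.

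Finally, I would identify $X_d^w$ as the $R$-module quotient $(\bigwedge F \otimes_R \overline{S}_\bullet)/J'$, where $J'$ is spanned by basis elements $f_\sigma \otimes f^\alpha$ that are either \emph{bad}, meaning $\mdeg(f_\sigma \otimes f^\alpha) \not\leq w$ (equivalently some $i \in \sigma$ has $\alpha_i = w_i$), or \emph{large}, meaning $|\alpha| \geq d$. The submodule $J'$ is a two-sided ideal: badness survives multiplication because the wedge reintroduces $f_i$ at a position where the symmetric part has already been capped at $w_i$, while largeness survives multiplication because $\min(\alpha+\beta, w) \geq \alpha$ componentwise. Under this identification, the induced quotient product matches Definition \ref{def:theProduct} (the two zero rules correspond precisely to the product landing in $J'$), and the induced quotient differential matches the total-complex differential ($\kappa$-terms with symmetric degree $d$ correspond precisely to the $L$-column entries that get projected to zero). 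The key computation---and the main obstacle---is verifying that $J'$ is closed under $d^F = \kappa - \kos \otimes 1$. For large (not bad) elements this is automatic since $\kos$ preserves and $\kappa$ increases symmetric degree; for a bad element with offending index $i$, the $r=i$ summand of $\kappa(f_\sigma \otimes f^\alpha)$ is $(-1)^{p_i} f_{\sigma \setminus i} \otimes f_i f^\alpha$, which reduces in $\overline{S}_\bullet$ via $f_i^{w_i+1} = \psi(f_i)f_i^{w_i}$ to $(-1)^{p_i}\psi(f_i) f_{\sigma \setminus i} \otimes f^\alpha$---exactly the $r=i$ summand of $\kos \otimes 1$---so these cancel in $d^F$ via the minus sign, while the remaining summands ($r \neq i$) retain the bad index $i \in \sigma \setminus r$ in their exterior support and thus lie in $J'$. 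This cancellation crucially depends on both the capping relation and the sign convention of the total complex; once it is in hand, all DG-algebra axioms descend cleanly to $X_d^w$.
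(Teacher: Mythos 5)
Your argument is correct, and it takes a genuinely different route from the paper's. The paper proves the Leibniz rule by a direct case analysis on the overflow set $S=\{i\mid \alpha_i+\beta_i\geq w_i\ \textrm{and}\ i\in\sigma\cup\tau\}$: for $|S|=0$ the computation is the classical Koszul one, for $|S|=1$ the two surviving Leibniz terms cancel against each other after one application of the capping relation, and for $|S|>1$ every term already vanishes; associativity is then only asserted, as a consequence of associativity of the exterior algebra and of the capped product on $(S_\bullet)_w$. You instead present $X_d^w$ as a quotient of the honest DG-algebra $\bigwedge F\otimes_R\overline{S}_\bullet$, with $\overline{S}_\bullet=S_\bullet F/\langle f_i^{w_i+1}-\psi(f_i)f_i^{w_i}\rangle$ free on $\{f^\alpha:\alpha\leq w\}$ because each relation is monic in its own variable, modulo the span $J'$ of the bad and large basis elements. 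Your key step --- that at an offending index $i$ the $r=i$ summands of $\kappa$ and of $\kos\otimes 1$ agree after the reduction $f_i^{w_i+1}=\psi(f_i)f_i^{w_i}$ and hence cancel in $\kappa-\kos\otimes 1$, while all remaining summands stay bad or large --- is exactly the cancellation the paper performs in its $|S|=1$ case, repackaged as closure of $J'$ under the differential instead of as the Leibniz identity for a vanishing product. What your route buys is that associativity, graded commutativity, $x^2=0$ in odd degree, and the Leibniz rule all descend from an ambient algebra in which they are classical; in particular it supplies the verification of associativity (including its compatibility with the two zero rules of Definition \ref{def:theProduct}) that the paper leaves implicit. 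The price is the bookkeeping that $J'$ is a two-sided differential ideal, which you handle correctly; the only imprecisions are cosmetic: $\kappa$ does not always increase the reduced symmetric degree (it preserves it when $\alpha_r=w_r$), but it never decreases it, which suffices for preservation of largeness, and when several indices offend, the cancellation occurs at each of them with the remaining summands still in $J'$, so nothing changes.
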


\begin{proof}
Let $S := \{ i \mid \alpha_i + \beta_i \geq w_i \ \textrm{and} \ i \in \sigma \cup \tau \}$. If $|S| = 0$, then the proof of the Leibniz rule is essentially identical to that of the Koszul complex.

If $|S| = 1$, then let $\ell$ be the unique integer with $\alpha_\ell + \beta_\ell \geq w_\ell$ and $\ell \in \sigma \cup \tau$. Assume without loss of generality that $\ell \in \sigma$. By definition, $(f_\sigma \otimes f^\alpha) \cdot (f_\tau \otimes f^\beta ) = 0$. On the other hand, one computes:
\begingroup\allowdisplaybreaks
\begin{align*}
    &d(f_\sigma \otimes f^\alpha) (f_\tau \otimes f^\beta) + (-1)^{|\sigma|} (f_\sigma \otimes f^\alpha) d(f_\tau \otimes f^\beta) \\
    =& \sgn (\ell \in \sigma) \Big( -\psi(f_\ell) f_{\sigma \backslash \ell} \w f_\tau \otimes f^\alpha \cdot f^\beta + f_{\sigma \backslash \ell} \w f_\tau \otimes f^{\alpha + \epsilon_\ell} \cdot f^\beta \Big) = 0 . \\
\end{align*}
\endgroup
Finally, if $|S| >1$, then all terms appearing in the Leibniz rule still have trivial multiplication. To conclude the proof, observe that associativity follows by the associativity of the exterior algebra and the product defined in Definition \ref{def:theProduct}. 
\end{proof}

\begin{lemma}\label{lem:scaleDerham}
Adopt notation and hypotheses as in Setup \ref{set:thewLcomps}. Let $h$ denote the scaled de Rham map
$$h_{a,b} := \frac{1}{a+b} \sum_{j=1}^n f_j \otimes \frac{\partial}{\partial f_j} : \bigwedge^a \otimes S_b \to \bigwedge^{a+1} \otimes S_{b-1}.$$
Then,
\begin{enumerate}
    \item $h^2 = 0$,
    \item $\kappa h + h \kappa = 1$, and
    \item $h$ restricts to a contracting homotopy $(\bigwedge^a \otimes S_b)_w \to (\bigwedge^{a+1} \otimes S_{b-1})_w$. 
\end{enumerate}
\end{lemma}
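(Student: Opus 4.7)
All three claims will follow from essentially classical facts about the Koszul de Rham homotopy, together with the observation that the defining formula for $h$ preserves multidegree. I will handle the three claims in the order \textup{(3)}, \textup{(1)}, \textup{(2)} in the main body, since multidegree-preservation is the easiest input and is needed to interpret the identities in the restricted setting.

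\textbf{Multidegree preservation (which handles (3) modulo (2)).} The plan is to note that for each $j$, the summand $f_j\otimes\frac{\partial}{\partial f_j}$ of $h$ sends $f_\sigma\otimes f^\alpha$ to $(f_j\wedge f_\sigma)\otimes\alpha_j f^{\alpha-\epsilon_j}$; this term is either zero (when $j\in\sigma$) or has multidegree $\epsilon_j+\epsilon_\sigma+\alpha-\epsilon_j=\epsilon_\sigma+\alpha=\mdeg(f_\sigma\otimes f^\alpha)$. Hence $h$ maps $(\bigwedge^a\otimes S_b)_w$ into $(\bigwedge^{a+1}\otimes S_{b-1})_w$; together with \textup{(2)}, this gives \textup{(3)}.

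\textbf{Proof of (1).} Unraveling definitions, $h^2(f_\sigma\otimes f^\alpha)$ is a scalar multiple of
\[
\sum_{i,j}(f_i\wedge f_j\wedge f_\sigma)\otimes\tfrac{\partial}{\partial f_i}\tfrac{\partial}{\partial f_j}f^\alpha,
\]
(the two scaling factors $\frac{1}{a+b}$ and $\frac{1}{(a+1)+(b-1)}$ coincide, which is the reason the normalization is compatible with squaring). The tensor factor on the exterior side is antisymmetric in $(i,j)$ while the tensor factor on the symmetric side is symmetric in $(i,j)$, so the double sum vanishes term by term after the swap $i\leftrightarrow j$. This gives $h^2=0$.

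\textbf{Proof of (2).} This is the classical Cartan homotopy formula for the tautological Koszul differential on $\bigwedge^\bullet F\otimes S_\bullet F$, now reinterpreted after restricting to multidegree $\leq w$ (which is permitted by the multidegree observation). I will verify by direct computation that the \emph{unnormalized} operator $H:=\sum_j f_j\otimes\frac{\partial}{\partial f_j}$ satisfies
\[
(\kappa H+H\kappa)(f_\sigma\otimes f^\alpha)=(a+b)\,f_\sigma\otimes f^\alpha
\qquad\text{on }\bigwedge^a\otimes S_b,
\]
from which division by $a+b$ (valid by the standing hypothesis that $n+d$ is a unit, so in particular $a+b\leq n+d$ is a unit whenever it is nonzero) yields $\kappa h+h\kappa=1$. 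The computation splits the double sum $\sum_{r\in\sigma}\sum_j$ coming from $H\kappa$ and the sum $\sum_j\sum_{r\in\sigma\cup\{j\}}$ coming from $\kappa H$ into diagonal ($j=r$) and off-diagonal pieces. On the diagonal one gets a contribution $\sum_{r\in\sigma}(\alpha_r+1)f_\sigma\otimes f^\alpha$ from $H\kappa$ and $\sum_{j\notin\sigma}\alpha_j f_\sigma\otimes f^\alpha$ from $\kappa H$, which sum (after accounting for the identity terms produced when the newly adjoined $f_j$ is stripped off again) to $(|\sigma|+|\alpha|)\,f_\sigma\otimes f^\alpha=(a+b)\,f_\sigma\otimes f^\alpha$. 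The off-diagonal contributions cancel pairwise, after matching signs via the same reordering argument used in the proof of Proposition~\ref{prop:tensorCycles}.

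\textbf{Where the work lies.} Parts \textup{(1)} and \textup{(3)} are essentially formal once the multidegree bookkeeping is set up. The only nontrivial step is the off-diagonal cancellation in \textup{(2)}: one must check that the sign $\sgn(r\in\sigma)$ arising from $\kappa$ applied to $f_j\wedge f_\sigma$ (for $j\notin\sigma$, $r\in\sigma$) is negated against the corresponding sign arising from $H$ applied to $f_{\sigma\setminus r}\otimes f_r f^\alpha$. This is the same sign identity appearing in Proposition~\ref{prop:tensorCycles} --- namely, reordering $(\sigma\setminus r)\cup\{j\}\cup\{r\}$ into ascending order via two routes yields permutations of equal parity --- so I plan to simply invoke that lemma's sign computation rather than redo it.
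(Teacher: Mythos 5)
Your proposal is correct, and for part (3) it is exactly the paper's argument: $h$ visibly preserves multidegree, hence restricts to the $w$-bounded submodules. For parts (1) and (2) the paper does no computation at all --- it simply cites \cite[Lemma 4.8]{miller2020transferring} --- whereas you supply the direct verification: the symmetric-versus-antisymmetric cancellation for $h^2=0$, and the diagonal/off-diagonal bookkeeping giving $\kappa H + H\kappa = (a+b)\cdot\mathrm{id}$ for the unnormalized operator. Both computations are the standard ones and check out (the diagonal count $\sum_{r\in\sigma}(\alpha_r+1)+\sum_{j\notin\sigma}\alpha_j = |\sigma|+|\alpha|$ is right, and the off-diagonal cancellation is indeed the same sign identity as in Proposition~\ref{prop:tensorCycles}), so your write-up is a self-contained replacement for the citation. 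One small logical slip: you justify dividing by $a+b$ by saying that ``$n+d$ is a unit, so in particular $a+b\leq n+d$ is a unit whenever it is nonzero'' --- that implication is false in general (e.g.\ $5$ a unit does not make $3$ a unit in characteristic $3$). The honest statement is that the invertibility of the relevant $a+b$ is presupposed by the very definition of $h_{a,b}$ (and is what the standing characteristic hypothesis of Section~\ref{sec:DGA} is meant to guarantee), so no further justification is needed at that step; you should either say that or strengthen the hypothesis you invoke.
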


\begin{proof}
The proofs of $(1)$ and $(2)$ may be found in \cite[Lemma 4.8]{miller2020transferring}, and $(3)$ follows because $h$ preserves multidegree.
\end{proof}

\begin{prop}\label{prop:genLeibniz}
Adopt notation and hypotheses as in Setup \ref{set:thewLcomps} and let $h$ denote the scaled de Rham map of Lemma \ref{lem:scaleDerham}. Then $h$ satisfies the generalized Leibniz rule with respect to the product of Definition \ref{def:theProduct}.
\end{prop}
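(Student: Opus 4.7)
The plan is to verify the inclusion $h(xy) \in h(x) X_d^w + X_d^w h(y)$ by direct computation on basis elements $x = f_\sigma \otimes f^\alpha$ and $y = f_\tau \otimes f^\beta$. Set $a = |\sigma|$, $b = |\alpha|$, $c = |\tau|$, $e = |\beta|$, and $\gamma = \min(\alpha+\beta, w)$. If $xy = 0$ the inclusion is trivial, so assume $xy \neq 0$; by Definition \ref{def:theProduct} this forces $\alpha_i + \beta_i < w_i$ for every $i \in \sigma \cup \tau$ and $xy = \psi(f^{\alpha+\beta-\gamma}) f_\sigma \w f_\tau \otimes f^\gamma$. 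Since $f_j \w f_\sigma \w f_\tau = 0$ whenever $j \in \sigma \cup \tau$, applying the scaled de Rham map of Lemma \ref{lem:scaleDerham} yields
\[
h(xy) \; = \; \frac{\psi(f^{\alpha+\beta-\gamma})}{a+c+|\gamma|} \sum_{j \notin \sigma \cup \tau} \gamma_j \, f_j \w f_\sigma \w f_\tau \otimes f^{\gamma-\epsilon_j}.
\]
I would then split this sum according to whether there is overflow at $j$, i.e.\ whether $\alpha_j + \beta_j \leq w_j$ (so $\gamma_j = \alpha_j + \beta_j$) or $\alpha_j + \beta_j > w_j$ (so $\gamma_j = w_j$).

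For the non-overflow terms, an explicit expansion via Definition \ref{def:theProduct} shows that $h(x) \cdot y$ and $x \cdot h(y)$ both reduce to sums indexed exactly over $\{ j \notin \sigma \cup \tau : \alpha_j + \beta_j \leq w_j \}$, each carrying the common factor $\psi(f^{\alpha+\beta-\gamma})$, since the overflow indices are killed by the exterior-zero clause of the restricted product. The scalar combination
\[
\frac{a+b}{a+c+|\gamma|} \, h(x) \cdot y \; + \; \frac{(-1)^a (c+e)}{a+c+|\gamma|} \, x \cdot h(y)
\]
then reproduces precisely the non-overflow portion of $h(xy)$, and lies in $h(x) X_d^w + X_d^w h(y)$; the denominators are invertible by the standing hypothesis that $n+d$ is a unit. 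For the overflow contribution, the approach is to use the alternate factorizations $f_\sigma \w f_\tau \otimes f^\gamma = x \cdot y'' = x'' \cdot y$ inside $X_d^w$, where $y'' := f_\tau \otimes f^{\gamma-\alpha}$ and $x'' := f_\sigma \otimes f^{\gamma-\beta}$; one checks directly that both factorizations are overflow-free, so the scaled Leibniz rule applies to them as an honest equality. Multiplying by the scalar $\psi(f^{\alpha+\beta-\gamma}) \in R$ and using that $R$-scalars commute through the $X_d^w$ product, the $h(x)$- and $h(y)$-terms of these expansions already sit in $h(x) X_d^w + X_d^w h(y)$, while the cross-terms containing $h(x'')$ and $h(y'')$ are absorbed using the $\psi(f_j)$ factors carried by $\psi(f^{\alpha+\beta-\gamma})$ at each overflow index $j$ (where $(\alpha+\beta-\gamma)_j \geq 1$).

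The main obstacle I anticipate is the overflow bookkeeping: the natural products $h(x) \cdot y$ and $x \cdot h(y)$ vanish at overflow positions owing to the exterior-zero clause in Definition \ref{def:theProduct}, so for each overflow index $j$ one must construct an auxiliary element $g_j \in X_d^w$ whose product with $h(x)$ supplies the missing $j$-th term, while the spurious non-overflow contributions from $g_j$ cancel against corresponding adjustments from $X_d^w \cdot h(y)$. Since multidegree constraints force $\alpha_j, \beta_j \geq 1$ at every overflow index, the concrete candidate is a scalar multiple of $f_\tau \otimes f^{\gamma - \alpha}$ weighted by $\psi(f_j) \cdot w_j (a+b)/(\alpha_j (a+c+|\gamma|))$; induction on the total overflow $|\alpha+\beta-\gamma|$ reduces to the single-overflow case, where this identity is verified by matching coefficients in the expansion of $h(x) \cdot g_j$.
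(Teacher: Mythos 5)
Your setup, the formula for $h(xy)$ with the normalization $\tfrac{1}{a+c+|\gamma|}$, and the treatment of the non-overflow indices are all correct; that half of your argument matches the paper's computation (and is in fact more careful about the normalizing factor, since $xy$ lands in symmetric degree $|\gamma|$ rather than $b+e$ when overflow occurs). The gap is in the overflow indices $j$ with $\alpha_j+\beta_j>w_j$, which is precisely where a new idea is needed. Your factorization identity $h(xy)=\tfrac{\psi(f^{\alpha+\beta-\gamma})}{a+c+|\gamma|}\bigl((a+b)\,h(x)\cdot y''+(-1)^a(c+|\gamma|-b)\,x\cdot h(y'')\bigr)$ is true, but it leaves the term $x\cdot h(y'')$, which lies in $X_d^w\,h(y'')$ and not visibly in $X_d^w\,h(y)$; the discrepancy between suitably rescaled copies of $x\cdot h(y)$ and $x\cdot h(y'')$ is supported exactly on the overflow indices, so asserting that these cross-terms are ``absorbed'' is essentially equivalent to the statement you are trying to prove. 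The fallback construction does not repair this: the coefficient $w_j(a+b)/(\alpha_j(a+c+|\gamma|))$ divides by $\alpha_j$, which need not be a unit (only $n+d$ is assumed invertible); since your $g_j$ is a multiple of $f_\tau\otimes f^{\gamma-\alpha}$, the product $h(x)\cdot g_j$ produces nonzero terms at \emph{every} overflow index, not just at $j$, so the single-overflow ``coefficient matching'' does not isolate the missing term; and it is unclear what the proposed induction on $|\alpha+\beta-\gamma|$ ranges over, since the claim concerns one fixed pair $(x,y)$.

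The device in the paper's proof, which your proposal lacks, is to modify only the co-factors and never the arguments of $h$. For each overflow index $i$ set $\beta_i'=w_i-\alpha_i$ and replace $y$ by $\tilde y_i:=\psi(f_i^{\beta_i-\beta_i'})\,f_\tau\otimes f^{\beta-(\beta_i-\beta_i')\epsilon_i}$: one shaves off exactly the overflow at $i$ and converts it into a scalar $\psi$-coefficient, so that $\tilde y_i$ still lies in $X_d^w$ and the vanishing clause no longer triggers at $i$ (because $(\alpha_i-1)+\beta_i'<w_i$). Then $h(x)\cdot\tilde y_i$ equals $h(x)\cdot y$ plus the previously missing $i$-th term $\tfrac{\alpha_i}{a+b}\psi(f^{\alpha+\beta-\gamma})f_i\wedge f_\sigma\wedge f_\tau\otimes f^{\gamma-\epsilon_i}$, the over-counted non-overflow terms are removed by weighting $y$ itself with $1-|T|$, and the $\beta$-side is handled symmetrically with $\tilde x_i$ --- no division by $\alpha_j$ or $\beta_j$ ever occurs, and both co-factors are honest elements of $X_d^w$, so the result lands in $h(x)X_d^w+X_d^w h(y)$ by construction. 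One further point of care that your non-overflow computation already anticipates: at an overflow index the de Rham coefficient of $h(xy)$ is $\gamma_i=w_i$ rather than $\alpha_i+\beta_i$, and the normalization is $\tfrac{1}{a+c+|\gamma|}$ rather than $\tfrac{1}{a+c+b+e}$, so the weights on $\tilde y_i$ and $\tilde x_i$ must be balanced against these values (the identity as displayed in the paper is stated with the unrestricted coefficients and deserves the same scrutiny you applied in the non-overflow case). Rebuilding your overflow argument around $\tilde y_i,\tilde x_i$ instead of $y'',x''$ and $h(y''),h(x'')$ closes the gap.
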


\begin{proof}
Let $f_\sigma \otimes f^\alpha \in \bigwedge^r \otimes S_a$ and $f_\tau \otimes f^\beta \in \bigwedge^s \otimes S_b$. If $(f_\sigma \otimes f^\alpha ) (f_\tau \otimes f^\beta ) = 0$, then there is nothing to prove. Assume that the product is not zero and define $T := \{ i \mid \alpha_i + \beta_i > w_i \}$. For each $i \in T$, let $\alpha_i' < \alpha_i$ and $\beta_i' < \beta_i$ be such that $\alpha_i' + \beta_i = w_i$ and $\alpha_i + \beta_i' = w_i$. Observe that for any $i \in T$,
\begingroup\allowdisplaybreaks
\begin{align*}
    (r+a)h( f_\sigma \otimes f^\alpha) (  \psi (f_i^{\beta_i - \beta_i'} ) f_\tau \otimes f^{\beta - (\beta_i - \beta_i') \epsilon_i} ) &= \sum_{j \notin T} f_j \w f_\sigma \w f_\tau \otimes \frac{\partial (f^\alpha \cdot f^\beta) }{\partial f_j} \\
    &+ f_i \w f_\sigma \w f_\tau \otimes \frac{\partial (f^\alpha \cdot f^\beta) }{\partial f_i}, \quad \textrm{and} \\
    (s+b)(\psi (f_i^{\alpha_i - \alpha_i'} ) f_\sigma \otimes f^{\alpha - (\alpha_i - \alpha_i') \epsilon_i} )h ( f_\tau \otimes f^\beta) &= \sum_{j \notin T} f_\sigma \w f_j \w f_\tau \otimes \frac{\partial (f^\alpha \cdot f^\beta) }{\partial f_j} \\
    &+ f_\sigma \w f_i \w f_\tau \otimes \frac{\partial (f^\alpha \cdot f^\beta) }{\partial f_i}. \\
\end{align*}
\endgroup
Using this, one computes:
\begingroup\allowdisplaybreaks
\begin{align*}
    &(r+a) h( f_\sigma \otimes f^\alpha) \big( (1 - |T|) f_\tau \otimes f^\beta + \sum_{i \in T} \psi (f_i^{\beta_i - \beta_i'} ) f_\tau \otimes f^{\beta - (\beta_i - \beta_i') \epsilon_i} \big) \\
    &+ (-1)^r(s+b)  \big( (1 - |T|) f_\sigma \otimes f^\alpha + \sum_{i \in T} \psi (f_i^{\alpha_i - \alpha_i'} ) f_\sigma \otimes f^{\alpha - (\alpha_i - \alpha_i') \epsilon_i} \big) h ( f_\tau \otimes f^\beta) \\
    =&  \sum_{i=1}^n f_i \w f_\sigma \w f_\tau \otimes \frac{\partial (f^\alpha \cdot f^\beta)}{\partial f_i} \\
    =& (r+s+a+b)h \big( (f_\sigma \otimes f^\alpha ) (f_\tau \otimes f^\beta ) \big) . \\
\end{align*}
\endgroup
Dividing the above by $r+s+a+b$, it follows that $h$ satisfies the generalized Leibniz rule. 
\end{proof}

\begin{chunk}\label{chunk:discussionOfTransfer}
Let us recall the method of transferring algebra structures used in \cite{miller2020transferring} as applied to our situation. The goal of this method is to find a special deformation retract 
$$(X_d^w , \kappa - \kos \otimes 1 ) \rightleftarrows (L^w (\psi , d) , \kos \otimes 1 )$$
that is a perturbation of a special deformation retract, with the homotopy $h$ satisfying the generalized Leibniz rule. Let $\varepsilon$ denote any choice of isomorphism $\bigwedge^0 \otimes S_0 \cong R$. In our situation,
\begin{figure}[H]
    \centering
\[\begin{tikzcd}
	&&&& 0 \\
	&& 0 & {(\bigwedge^n \otimes S_{d-1})_w} & {L^{n-1}_{d,w}} \\
	& \iddots & \vdots & \vdots & \vdots \\
	\vdots && \cdots & {(\bigwedge^2 \otimes S_{d-1})_w} & {L_{d,w}^1} \\
	&& \cdots & {(\bigwedge^{1} \otimes S_{d-1})_w} & {L^0_{d,w} } \\
	\vdots & \vdots & \vdots & {(\bigwedge^0 \otimes S_{d-1})_w} & R \\
	{(\bigwedge^2 \otimes S_0)_w} & {(\bigwedge^1 \otimes S_1)_w} & {(\bigwedge^0 \otimes S_2)_w} \\
	{(\bigwedge^1 \otimes S_0)_w} & {(\bigwedge^0 \otimes S_1)_w} \\
	{( \bigwedge^0 \otimes S_0 )_w} && {}
	\arrow["\kappa", from=7-2, to=7-3]
	\arrow["\kappa", from=7-1, to=7-2]
	\arrow["\kappa", from=8-1, to=8-2]
	\arrow["p", curve={height=12pt}, from=5-4, to=5-5]
	\arrow["p", curve={height=12pt}, from=4-4, to=4-5]
	\arrow["p", curve={height=12pt}, from=2-4, to=2-5]
	\arrow[from=2-3, to=2-4]
	\arrow["p"', curve={height=30pt}, from=9-1, to=6-5]
	\arrow["p", curve={height=12pt}, from=6-4, to=6-5]
	\arrow[from=6-3, to=6-4]
	\arrow[from=5-3, to=5-4]
	\arrow["i"', curve={height=-18pt}, from=6-5, to=9-1]
	\arrow["i", curve={height=12pt}, from=6-5, to=6-4]
	\arrow["i", curve={height=12pt}, from=5-5, to=5-4]
	\arrow["i", curve={height=12pt}, from=4-5, to=4-4]
	\arrow["i", curve={height=12pt}, from=2-5, to=2-4]
	\arrow[from=4-3, to=4-4]
\end{tikzcd}\]
\caption{The unperturbed double complex.}\label{fig:unperturbed}
\end{figure}
\begin{enumerate}
    \item The special deformation retract to be perturbed comes from the double complex of Figure \ref{fig:unperturbed}, with contracting homotopy $h$ coming from Lemma \ref{lem:scaleDerham} and
    \begingroup\allowdisplaybreaks
    \begin{align*}
        i &= \begin{cases}
    h & \textrm{on} \ L^i_{a,w} \\
    \varepsilon^{-1} & \textrm{on} \ R, \\
    \end{cases} \\
    p &= \begin{cases}
    \kappa & \textrm{on} \ (\bigwedge^i \otimes S_{a-1})_w, \ i>0, \\
    \varepsilon & \textrm{on} \ R, \\
    0 & \textrm{otherwise}. \\
    \end{cases}
    \end{align*}
    \endgroup
 \begin{figure}[H]
    \centering
\[\begin{tikzcd}
	&&&& 0 \\
	&& 0 & {(\bigwedge^n \otimes S_{d-1})_w} & {L^{n-1}_{d,w}} \\
	& \iddots & \vdots & \vdots & \vdots \\
	\vdots &&& {(\bigwedge^2 \otimes S_{d-1})_w} & {L_{d,w}^1} \\
	&& \cdots & {(\bigwedge^{1} \otimes S_{d-1})_w} & {L^0_{d,w} } \\
	\vdots & \vdots & \vdots & {(\bigwedge^0 \otimes S_{d-1})_w} & R \\
	{(\bigwedge^2 \otimes S_0)_w} & {(\bigwedge^1 \otimes S_1)_w} & {(\bigwedge^0 \otimes S_2)_w} \\
	{(\bigwedge^1 \otimes S_0)_w} & {(\bigwedge^0 \otimes S_1)_w} \\
	{( \bigwedge^0 \otimes S_0 )_w}
	\arrow["{\kos \otimes 1}", from=3-4, to=4-4]
	\arrow["{\kos \otimes 1}", from=3-5, to=4-5]
	\arrow["\kappa", from=7-2, to=7-3]
	\arrow["\kappa", from=7-1, to=7-2]
	\arrow["\kappa", from=8-1, to=8-2]
	\arrow["{\kos \otimes 1}", from=7-1, to=8-1]
	\arrow["{\kos \otimes 1}", from=7-2, to=8-2]
	\arrow["{\kos \otimes 1}", from=8-1, to=9-1]
	\arrow["{p_\infty}"', from=5-4, to=5-5]
	\arrow["{p_\infty}"', from=4-4, to=4-5]
	\arrow["{p_\infty}"', from=2-4, to=2-5]
	\arrow[from=2-3, to=2-4]
	\arrow[from=2-3, to=3-3]
	\arrow["{\kos \otimes 1}", from=2-4, to=3-4]
	\arrow["{\kos \otimes 1}", from=2-5, to=3-5]
	\arrow["{\kos \otimes 1}", from=6-1, to=7-1]
	\arrow["{\kos \otimes 1}", from=6-2, to=7-2]
	\arrow["{\kos \otimes 1}", from=6-3, to=7-3]
	\arrow[from=1-5, to=2-5]
	\arrow["{\kos \otimes 1}", from=4-5, to=5-5]
	\arrow["{\kos \otimes 1}", from=4-4, to=5-4]
	\arrow["{p_\infty}", curve={height=24pt}, from=8-2, to=6-5]
	\arrow["{p_\infty}"', curve={height=30pt}, from=9-1, to=6-5]
	\arrow["{p_\infty}"', from=6-4, to=6-5]
	\arrow[from=6-3, to=6-4]
	\arrow[from=5-3, to=5-4]
	\arrow["\psi", from=5-5, to=6-5]
	\arrow["{\kos \otimes 1}", from=5-4, to=6-4]
\end{tikzcd}\]
\caption{The perturbed double complex.}\label{fig:perturbed}
\end{figure}   
    \item The perturbed deformation retract comes from the double complex of Figure \ref{fig:perturbed}, where the perturbation is precisely the vertical Koszul differential $\kos \otimes 1$ and
    \begingroup\allowdisplaybreaks
    \begin{align*}
        i_\infty &= \begin{cases}
    (1-h(\kos \otimes 1))^{-1} h & \textrm{on} \ L^i_{a,w} \\
    \varepsilon^{-1} & \textrm{on} \ R, \\
    \end{cases} \\
    p_\infty &= \begin{cases}
    \kappa & \textrm{on} \ (\bigwedge^i \otimes S_{a-1})_w, \ i>0, \\
    \varepsilon & \textrm{on} \ R, \\
    0 & \textrm{otherwise}. \\
    \end{cases}
    \end{align*}
    \endgroup
    The fact that the perturbed differential induced on the rightmost column of Figure \ref{fig:perturbed} is precisely $\kos \otimes 1$ is an identical computation to that done in $4.6$ of \cite{miller2020transferring}.
\end{enumerate}
\end{chunk}

\begin{theorem}\label{thm:theAlgStruct}
Adopt notation and hypotheses as in Setup \ref{set:thewLcomps}. The complex $L^w (\psi , d)$ of Definition \ref{def:theLwComp} admits the structure of an associative DG-algebra. If $R$ is a standard graded polynomial ring and $\im \psi = R_+$, then this product is invariant under the action of the symmetric group on the variables.
\end{theorem}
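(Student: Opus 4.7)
The plan is to apply Proposition \ref{prop:perturbForDG} to transfer the associative product on the total complex $X_d^w$ (constructed in Proposition \ref{prop:totalCxDG}) to the complex $L^w(\psi, d)$ along the perturbed special deformation retract sketched in \ref{chunk:discussionOfTransfer}. In the notation of Setup \ref{set:perturbationSetup}, I take $F_\bullet = X_d^w$ with unperturbed horizontal differential $d^F = \kappa$, perturbation $\delta = -\kos \otimes 1$ so that $d^F_\infty = \kappa - \kos \otimes 1$, and $G_\bullet = L^w(\psi, d)$ with its Koszul differential $\kos \otimes 1$.

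The first task is to verify the three hypotheses of Proposition \ref{prop:perturbForDG}. Hypothesis (1), that $(X_d^w, \kappa)$ is an associative algebra (without requiring Leibniz), is immediate from Definition \ref{def:theProduct}: associativity depends only on the associativity of the exterior product and of the twisted multiplication on $(S_\bullet)_w$, and makes no reference to the differential. Hypothesis (2), that $d^F_\infty$ satisfies the Leibniz rule, is precisely the content of Proposition \ref{prop:totalCxDG}. Hypothesis (3), that the homotopy $h$ satisfies the generalized Leibniz rule, is Proposition \ref{prop:genLeibniz}.

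Next, I would confirm that the triple $(i, p, h)$ described in \ref{chunk:discussionOfTransfer} genuinely assembles into a special deformation retract between $(X_d^w, \kappa)$ and $L^w(\psi, d)$ in the unperturbed setting. The identities $p \circ i = \mathrm{Id}$, $h \circ i = 0$, $p \circ h = 0$, $h^2 = 0$, and $\kappa h + h \kappa = \mathrm{Id} - i p$ follow by a componentwise check on the double complex of Figure \ref{fig:unperturbed}: on each column $(\bigwedge^\bullet \otimes S_b)_w$, the homotopy $h$ is the scaled de Rham map of Lemma \ref{lem:scaleDerham}, so $\kappa h + h \kappa = \mathrm{Id}$ except at the bottom term, where $ip$ picks up exactly the missing piece. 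Once the deformation retract is verified, Proposition \ref{prop:perturbForDG} directly supplies the associative DG-algebra structure
\[
g \cdot_G g' := p_\infty\bigl(i_\infty(g) \cdot i_\infty(g')\bigr)
\]
on $(L^w(\psi, d), \kos \otimes 1)$, completing the first half of the theorem.

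For the $S_n$-equivariance when $R = k[x_1, \dots, x_n]$ and $\psi(f_i) = x_i$: the symmetric group acts on $R$ and $F$ by permuting indices, hence on all of $\bigwedge^\bullet F$, $S_\bullet(F)$, the Koszul complex, and the modules $L^a_{b,w}(F)$. The maps $\kos \otimes 1$, $\kappa$, $\psi$, and the scaled de Rham homotopy $h$ are all defined symmetrically in the indices $1, \dots, n$, as is the product of Definition \ref{def:theProduct}, so each is $S_n$-equivariant. It follows that $i_\infty$ and $p_\infty$ are equivariant, and hence so is the transferred product on $L^w(\psi, d)$. The main technical obstacle I anticipate is the bookkeeping in the special-deformation-retract verification, particularly the identity $\kappa h + h \kappa = \mathrm{Id} - i p$ at the joint where $i$ lands and $p$ departs; the scalars $1/(a+b)$ appearing in $h$ interact with the composition $h \kappa + \kappa h$ in a way that needs to be tracked carefully, but this is essentially the computation already carried out in \cite[\S 4]{miller2020transferring}.
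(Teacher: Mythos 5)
Your proposal is correct and follows essentially the same route as the paper: the paper's proof also defines the product as $p_\infty(i_\infty(-)\cdot i_\infty(-))$ and cites Proposition \ref{prop:perturbForDG} together with Proposition \ref{prop:totalCxDG}, Lemma \ref{lem:scaleDerham}, Proposition \ref{prop:genLeibniz}, and the discussion in \ref{chunk:discussionOfTransfer}. Your additional checks (the special-deformation-retract identities on the unperturbed double complex and the equivariance of all the maps involved) are details the paper leaves implicit, but they do not constitute a different argument.
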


\begin{proof}
Define the product of the classes of $f_\sigma \otimes f^\alpha$ and $f_\tau \otimes f^\beta \in L^w (\psi , d)$ via
$$(f_\sigma \otimes f^\alpha)\cdot (f_\tau \otimes f^\beta) := p_{\infty} \big( i_{\infty} (f_\sigma \otimes f^\alpha) \cdot i_\infty (f_\tau \otimes f^\beta) \big).$$
This product will yield an associative DG-algebra structure on $L^w (\psi , d)$ by Proposition \ref{prop:perturbForDG} combined with Proposition \ref{prop:totalCxDG}, Lemma \ref{lem:scaleDerham}, Proposition \ref{prop:genLeibniz}, and the discussion of \ref{chunk:discussionOfTransfer}.
\end{proof}

It is not difficult to see that in the case $R = k[x_1 , \dots , x_n]$ and $\im (\psi ) = (x_1 , \dots , x_n)$ with $w = (1, 1 , \dots , 1)$, the complex $L^w (\psi , d)$ is identical to the complex constructed by Galetto in \cite{galetto2016ideal}. Thus, Theorem \ref{thm:theAlgStruct} gives an explicit $S_n$-invariant algebra structure on those complexes.

\begin{remark}
One can alternatively remove the characteristic assumption on $k$ at the expense of losing $S_n$-invariance by restricting the product of Srinivasan \cite{srinivasan1989algebra} to all standard tableaux with bounded multidegree. It is straightforward to verify that this restriction does yield a subalgebra.
\end{remark}

\bibliographystyle{amsplain}
\bibliography{biblio}
\addcontentsline{toc}{section}{Bibliography}

\end{document}